\def\Z {{\mathbb Z}}
\def\R {\mathbb{R}}
\def\T {\mathbb{T}}
\def\N {\mathbb{N}}
\def\d{{\,\rm d}}
\newtheorem{proposition}{Proposition}[section]
\newtheorem{theorem}[proposition]{Theorem}
\newtheorem{lemma}[proposition]{Lemma}
\theoremstyle{definition}
\newtheorem{definition}[proposition]{Definition}
\newtheorem{remark}[proposition]{Remark}
\numberwithin{equation}{section}
\begin{document}

\title[Observability of dispersive equations]{Observability of dispersive equations from line segments on the torus}

\author[Y. Wang]{Yunlei Wang$^{1}$}

\author[M. Wang]
{Ming Wang$^2$}

\thanks{1.Institut de Math\'{e}matiques de Bordeaux, Universit\'{e} de Bordeaux 351, cours de la Lib\'{e}ration, F 33405 TALENCE cedex\\	
2.School of Mathematics and Statistics, HNP-LAMA, Central South University, Hunan, Changsha, 410083, PR China\\	
  Emails: yunlei.wang@math.u-bordeaux.fr; m.wang@csu.edu.cn(Corresponding author.)
}

\subjclass[2010]{93B07, 35Q41, 93C20}

\keywords{observability, dispersive equation, nonharmonic Fourier series}

\begin{abstract}
  We investigate the observability of a general class of linear dispersive equations on the torus $\mathbb{T}$. We take one line segment or two line segments in space-time region as the observable set. We give the characteristic on the slopes of the line segments to  guarantee the qualitative observability and quantitative observability respectively. The one line segment case, is simple, follows directly from the Ingham's inequality. However, the two line segments case is difficult, the statement of results and the proof  rely  heavily on the language of graph theory. We also apply our results to (higher order) Schr\"{o}dinger equations and the linear KdV equation.
\end{abstract}

\maketitle

\section{Introduction}
\label{sec:1}

\subsection{Background and the problem}
Let $\T:=\R / 2\pi\Z$ be the one dimensional torus. Consider the Schr\"{o}dinger equation on $\T$
\begin{equation}\label{first}
     \partial_t u=i\partial_x^2u, \quad  u(0,x)\in L^2(\T).
\end{equation}
Then for every nonempty open set $G\subset \R\times \T$, the observability inequality
\begin{equation}\label{equ-ob-open-space-time}
      \int_{\T}|u(0,x)|^2\d x\le C(G)\int_G |u(t,x)|^2\d x\mathrm{d}t,
\end{equation}
holds for all solutions of \eqref{first}, where $C(G)>0$ is a constant depending only on $G$. This is a classical result, see e.g. \cite{haraux1989series,jaffard1990controle,komornik1992exact}, \cite[pp.~166-172]{komornik2005fourier},  and \cite[pp.~280-284]{tucsnak2009observation} for a proof.
Roughly speaking, \eqref{equ-ob-open-space-time} means that one can recover the solution by observation on the region $G$.
In particular, \eqref{equ-ob-open-space-time} implies that
\begin{equation}\label{equ-ob-open-space-1}
    \int_{\T}|u(0,x)|^2\d x\le C(T,\omega)\int_0^T\int_\omega |u(t,x)|^2\d x\mathrm{d}t,
\end{equation}
where $T>0$, $\omega \subset \T$ is a nonempty open set. Burq and Zworski \cite{burq2019rough} further showed that \eqref{equ-ob-open-space-1} still holds if $\omega \subset \T$ is a Lebesgue measurable set with positive measure.
We refer to \cite{anantharaman2016wigner,bourgain2013control,huang2022observable,rosier2009exact,tao2021exact}  for more results on the observability inequality for Schr\"{o}dinger equations with potentials and in higher dimensions.

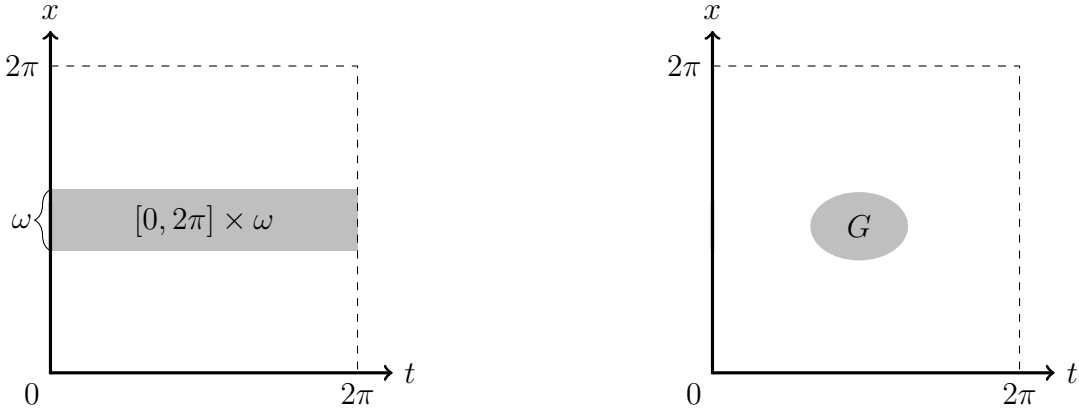
\begin{figure}[htbp]
    \centering
    \begin{subfigure}[b]{0.45\textwidth}
        \begin{tikzpicture}[scale=0.65]
           \fill[gray!50] (0,2.5)--(0,3.75)--(2*3.14,3.75)--(2*3.14,2.5) -- cycle;
           \draw[very thick,->] (-0,0) -- (7,0) node[right] {$t$};
           \draw[very thick,->] (0,-0.03) -- (0,7) node[above]{$x$};
           \draw (0,0) node[below left]{$0$};
           \draw (0,3.14*2) node[left]{$2\pi$};
           \draw (3.14*2,0) node[below]{$2\pi$};
           \draw[dashed] (0,3.14*2) -- (3.14*2,3.14*2) -- (3.14*2,0);
           \draw[very thick] (0,2.5) -- (0,3.5);
           \draw[decorate, decoration={brace,amplitude=6pt}] (0,2.5)--(0,3.75);
           \draw (-0.15,3.1) node[left] {$\omega$};
           \draw (3.14,3.1) node {$[0,2\pi]\times \omega$};
    \end{tikzpicture}
    \end{subfigure}
    \hfill
    \begin{subfigure}[b]{0.45\textwidth}
            \begin{tikzpicture}[scale=0.65]
           \draw[very thick,->] (-0,0) -- (7,0) node[right] {$t$};
           \draw[very thick,->] (0,-0.03) -- (0,7) node[above]{$x$};
           \draw (0,0) node[below left]{$0$};
           \draw (0,3.14*2) node[left]{$2\pi$};
           \draw (3.14*2,0) node[below]{$2\pi$};
           \draw[dashed] (0,3.14*2) -- (3.14*2,3.14*2) -- (3.14*2,0);
           \draw[very thick] (0,2.5) -- (0,3.5);
           \fill[gray!50] (3,3) ellipse (1 and 0.7);
           \draw (3,3) node {$G$};
    \end{tikzpicture}
    \end{subfigure}
    \caption{Observability from a positive measure set in $[0,2\pi]^2$.}
    \label{fig:1}
\end{figure}
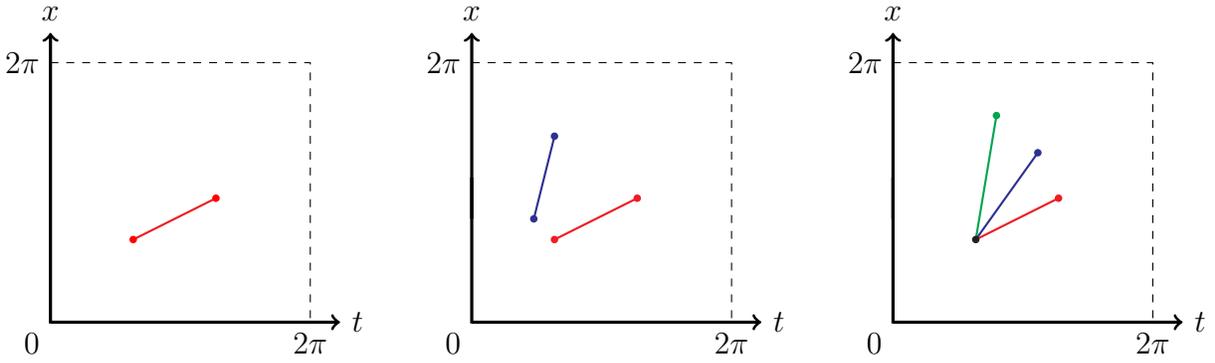
\begin{figure}[htbp]
    \centering
    \begin{subfigure}[b]{0.3\textwidth}
        \begin{tikzpicture}[scale=0.55]
           \draw[very thick,->] (-0,0) -- (7,0) node[right] {$t$};
           \draw[very thick,->] (0,-0.03) -- (0,7) node[above]{$x$};
           \draw (0,0) node[below left]{$0$};
           \draw (0,3.14*2) node[left]{$2\pi$};
           \draw (3.14*2,0) node[below]{$2\pi$};
           \draw[dashed] (0,3.14*2) -- (3.14*2,3.14*2) -- (3.14*2,0);
           \draw[Red,thick,opacity=0.6] (2,2)--(4,3);
           \draw (2,2)node[circle, fill,red, inner sep=1pt]{};
           \draw (4,3)node[circle, fill,red, inner sep=1pt]{};
    \end{tikzpicture}
    \end{subfigure}
    \hfill
    \begin{subfigure}[b]{0.3\textwidth}
            \begin{tikzpicture}[scale=0.55]
           \draw[very thick,->] (-0,0) -- (7,0) node[right] {$t$};
           \draw[very thick,->] (0,-0.03) -- (0,7) node[above]{$x$};
           \draw (0,0) node[below left]{$0$};
           \draw (0,3.14*2) node[left]{$2\pi$};
           \draw (3.14*2,0) node[below]{$2\pi$};
           \draw[dashed] (0,3.14*2) -- (3.14*2,3.14*2) -- (3.14*2,0);
           \draw[very thick] (0,2.5) -- (0,3.5);
           \draw[Red,thick,opacity=0.6] (2,2)--(4,3);
           \draw[Blue,opacity=0.6,thick] (1.5,2.5)--(2,4.5);
           \draw (2,2)node[circle, fill,Red, inner sep=1pt]{};
           \draw (4,3)node[circle, fill,Red, inner sep=1pt]{};
           \draw (1.5,2.5)node[circle, fill,Blue, inner sep=1pt]{};
           \draw (2,4.5)node[circle, fill,Blue, inner sep=1pt]{};
    \end{tikzpicture}
    \end{subfigure}
    \hfill
    \begin{subfigure}[b]{0.3\textwidth}
            \begin{tikzpicture}[scale=0.55]
           \draw[very thick,->] (-0,0) -- (7,0) node[right] {$t$};
           \draw[very thick,->] (0,-0.03) -- (0,7) node[above]{$x$};
           \draw (0,0) node[below left]{$0$};
           \draw (0,3.14*2) node[left]{$2\pi$};
           \draw (3.14*2,0) node[below]{$2\pi$};
           \draw[dashed] (0,3.14*2) -- (3.14*2,3.14*2) -- (3.14*2,0);
           \draw[very thick] (0,2.5) -- (0,3.5);
           \draw[Red,thick,opacity=0.6] (2,2)--(4,3);
           \draw[Blue,thick,opacity=0.6] (2,2)--(3.5,4.1);
           \draw[Green,thick,opacity=0.6] (2,2)--(2.5,5);
           \draw (2,2)node[circle, fill,Black, inner sep=1pt]{};
           \draw (4,3)node[circle, fill,Red, inner sep=1pt]{};
           \draw (2.5,5)node[circle, fill,Green, inner sep=1pt]{};
           \draw (3.5,4.1)node[circle, fill,Blue, inner sep=1pt]{};
    \end{tikzpicture}
    \end{subfigure}
    \caption{Observability from one line segments holds if the slope is not an integer (see left). Observability from two line segments fails if the slopes are integers (see middle). Observability from $N\ge 3$ line segments fails if the slopes are integers (see right and note that they have the common starting point).}
    \label{fig:2}
\end{figure}

Note that the observation region mentioned above is at least a set with positive measure in $\R\times \T$, see Figure \ref{fig:1}. Recently, Jaming and Komornik \cite{jaming2020moving} introduced a new observability inequality for \eqref{first}, with observation from line segments in the space-time plane. The observation regions are essentially smaller than that in previous references since the two dimensional Lebesgue measure is zero now.  In fact, it is shown in  \cite{jaming2020moving} that, for every $t_0,x_0\in \R$,  the observability inequality from $N$ segments (with the same starting point if $N\ge 3$)
\begin{equation}\label{equ-ob-line-sch}
    \int_\T |u(0,x)|^2\d x\le C(T,v)\sum_{i=1}^{N}\int_0^T|u(t_0+t,x_0-v_i t)|^2\d t,
\end{equation}
holds for all solutions to \eqref{first} and all $T>0$ if and only if there exists $v_i \in \R \backslash \Z$ for some $i \in \left\{1,2,\cdots,N\right\}$, see Figure~\ref{fig:2}.
The proof follows from various kinds of Ingham's theorems and the algebra structure of the symbol $k^2$ associated to the Schr\"{o}dinger equation. A closely related work of Bourgain and Rudnick \cite{bourgain2012Restriction} shows that, the $L^2$ norm of eigenfunctions to the Laplacian on the torus $\T^d(d\leq 3)$ can be controlled by its restrictions on surfaces with non-vanishing curvature.

The inequality \eqref{equ-ob-line-sch} is called observability inequality with moving points in \cite{jaming2020moving}.
We refer to \cite{capistrano2020pointwise,jaming2018dynamical,khapalov1995exact,khapalov2017mobile,micu2022fractional} for more observability from moving points, and to \cite{beauchard2020geometric,chaves2014null,le2017geometric,martin2013null}  for observability from moving observation domain in higher dimensions.

Inspired by \cite{jaming2020moving}, we consider a family of linear dispersive equations on $\T$, and investigate its qualitative and quantitative observability  from line segments.
More precisely, we consider the dispersive equation on $\T$
\begin{equation}\label{eqn-abstract}
    \partial_t u= i P(D)u,\quad  u(0,x)=u_0(x)\in L^2(\T),
\end{equation}
where $u=u(t,x)$ denotes a real or complex-valued function, $P(D)$ denotes a Fourier multiplier operator, $D=i^{-1}\partial_x$. Let $p:\Z\to \R$ be the symbol of $P(D)$, namely
\begin{equation}
    \widehat{P(D)u}(k):=p(k)\widehat{u}(k),\quad k\in \Z,\label{eqn-condition-1}
\end{equation}
where $\widehat{u}(k)$ stands for the $k$-th Fourier coefficient of $u$
\begin{equation}
    \widehat{u}(k)=\frac{1}{2\pi }\int_0^{2\pi} u(x)e^{-ikx}\d x.
\end{equation}
{It should be noted that $p$ need not be a polynomial.} Since $p$ is real-valued, $iP(D)$ is anti-symmetric on $L^2(\T)$. By Stone's theorem \cite{pazy2012semigroups}, $iP(D)$ generates a unitary group $\{e^{itP(D)}\}_{t\in \R}$ on $L^2(\R)$, thus \eqref{eqn-abstract} is well-posed in $L^2(\T)$ and
$\|u(t,\cdot)\|_{L^2(\T)}=\|u(0,\cdot)\|_{L^2(\T)}, \forall t\in \R.$
The controllability and stabilization of \eqref{eqn-abstract} has been studied in \cite{pastor2022two}.

Let $T>0, n\in \N:=\lbrace 1,2,3,\cdots\rbrace$ and $(t_i,x_i)\in \R^2,i=1,2,\cdots,n$ be given. We are mainly interested in the following two problems:
\vspace{2.6mm}

\noindent {\bf Qualitative observability problem:} Find out the characterization of $v_1,v_2,\cdots,$ $v_n\in \R$ such that the unique continuation property
\begin{equation}\label{eqn-qualitative}
    u(t_i+t,x_i-v_it)=0,\forall i=1,2,\cdots,n,t\in (0,T)\Longrightarrow u_0=0
\end{equation}
holds for all solutions $u(t,x)$ of \eqref{eqn-abstract}.

\noindent {\bf Quantitative observability problem:} Find out the characterization of $v_1,v_2,\cdots,$ $v_n\in \R$ such that  the observability inequality
\begin{equation}\label{eqn-quantitative}
    \int_\T|u_0(x)|^2\d x\lesssim \int_0^T\sum_{i=1}^n |u(t_i+t,x_i- v_i t)|^2\d  t
\end{equation}
holds for all solutions $u(t,x)$ of \eqref{eqn-abstract}.

For some technical reasons, we shall focus on the cases $n=1,2$ in this paper (see Remark~\ref{rmk2} (iv) in detail).

\subsection{Some notations}
\label{subsec:12}
Throughout, we use $A\lesssim B$ to denote $A\le CB$ for some unharmful constant $C>0$. We write $A\asymp B$ if both $A\lesssim B$ and $B\lesssim A$ hold.

For every $u_0\in L^2(\T)$, we have the Fourier expansion
\begin{equation}\label{equ-data}
    u_0(x)=\sum_{k\in\Z} c_ke^{ikx},
\end{equation}
where $c_k$ is the Fourier coefficient of $u_0$. Then, by Fourier transform, the solution $u$ of \eqref{eqn-abstract} is given by
\begin{equation}\label{equ-abstract-solu}
    u(t,x)=\sum_{k\in\Z}c_ke^{ip(k) t}e^{ikx}.
\end{equation}
Given $v\in\R$. By \eqref{equ-abstract-solu}, $u(t_0+t,x_0-vt)$ can be expressed as
\begin{equation}\label{eqn-one-point-representation}
    u(t_0+t,x_0-vt)=\sum_{k\in\Z}c_ke^{ip(k)(t_0+t)}e^{ik(x_0-vt)}=\sum_{k\in\Z} c_ke^{i(p(k)t_0+kx_0)} e^{i\lambda_k(v)t},
\end{equation}
where $\lambda_k(v):=p(k)-kv$. For each $k\in \Z$, we set
\begin{equation}
    \Xi_k(v):=\bigl\lbrace m\in\Z : \lambda_m(v)=\lambda_k(v)\bigr\rbrace
\end{equation}
and $n_k(v):=|\Xi_k(v)|$ denotes the cardinality. {If $p$ is a polynomial, one has $n_k(v)\leq \mbox{deg } p$ and even $n_k(v)\leq 2$ if $|k|$ is large enough, see Figure \ref{fig:add}.}

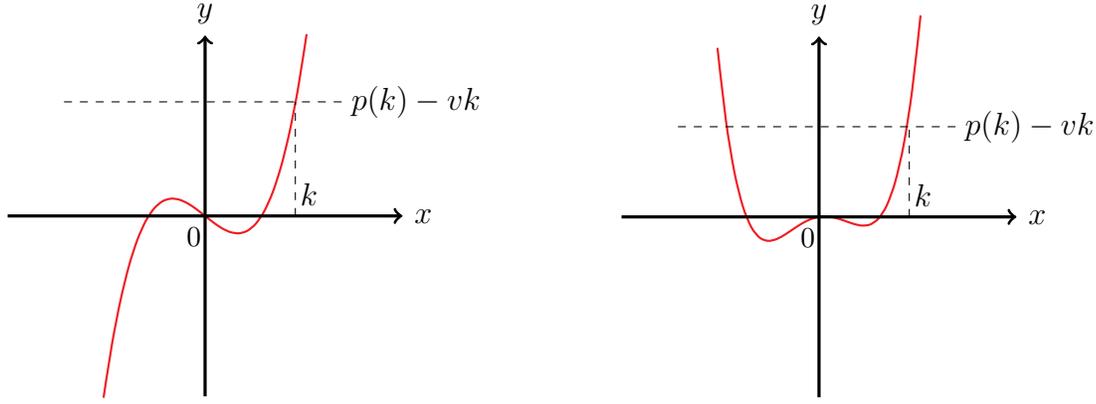
\begin{figure}[htbp]
        \centering
            \begin{subfigure}[b]{0.49\textwidth}
         \begin{tikzpicture}[xscale=0.75,yscale=0.6]
         \draw[Red,thick,opacity=0.6] plot[domain=-1.8:1.8, smooth] (\x, \x*\x*\x-1*\x);
           \draw[very thick,->] (-3.5,0) -- (3.5,0) node[right] {$x$};
           \draw[very thick,->] (0,-4) -- (0,4) node[above] {$y$};
           \draw (-0.2,-0.01) node[below] {\small $0$};
         \draw[dashed] (1.6,0) -- (1.6,2.53);
         \draw[dashed] (-2.5,2.53) -- (2.5,2.53);
         \draw (1.5,0) node[above right] {$k$};
         \draw (2.4,2.53) node[right] {$p(k)-vk$};
        \end{tikzpicture}
        \end{subfigure}
        \hfill
        \begin{subfigure}[b]{0.49\textwidth}
         \begin{tikzpicture}[xscale=0.75,yscale=0.6]
         \draw[Red,thick,opacity=0.6] plot[domain=-1.8:1.8, smooth] (\x, 0.7*\x*\x*\x*\x-1*\x*\x+0.2*\x);
           \draw[very thick,->] (-3.5,0) -- (3.5,0) node[right] {$x$};
           \draw[very thick,->] (0,-4) -- (0,4) node[above] {$y$};
           \draw (-0.2,-0.01) node[below] {\small $0$};
         \draw[dashed] (1.6,0) -- (1.6,2);
         \draw[dashed] (-2.5,2) -- (2.5,2);
         \draw (1.5,0) node[above right] {$k$};
         \draw (2.4,2) node[right] {$p(k)-vk$};
        \end{tikzpicture}            
        \end{subfigure}
        \caption{Graphs of polynomial function $y=p(x)-vx$: (1) if $\mathrm{deg}\,p$ is odd, then for $|k|$ large enough, $n_k(v)=1$ (see left); (2) if $\mathrm{deg}\,p$ is even, then for $|k|$ large enough, $n_k(v)\le 2$ (see right).}
        \label{fig:add}
    \end{figure}

We denote
\begin{equation}
    \Pi(v):=\bigl\lbrace k\in\Z : \exists m\in\Z,m\neq k \text{ such that } \lambda_k(v)=\lambda_m(v) \bigr\rbrace.\label{eqn-piv}
\end{equation}

Define
\begin{equation}\label{equ-Lambda(v)}
    \Lambda(v):=\lbrace \lambda_k(v) :k\in\Z \rbrace
\end{equation}
and
\begin{equation}\label{equ-gamma(v)-1}
        \gamma'(v):=\sup\limits_{W\subset \Lambda(v)}\inf\limits_{\substack{\lambda,\lambda'\in \Lambda(v)\backslash W\\ \lambda\neq \lambda'}}|\lambda-\lambda'|
\end{equation}
    where $W$ runs over all finite subsets of $\Lambda(v)$. In particular, if $|\Pi(v)|<\infty$, then
    \begin{equation}\label{equ-gamma(v)-2}
        \gamma'(v)=\sup\limits_{W\subset \Z }\inf\limits_{\substack{k,n\in \Z\backslash W\\ k\neq n}}|\lambda_k-\lambda_n|>0
    \end{equation}
    where $W$ runs over all finite subsets of $\Z$.

To present our results on the observability with observations from two line segments, we need to introduce some definitions further. Let $v_1,v_2\in \R$ be two different real numbers. We construct the graph $G(v_1,v_2)$ through the following rules:
\begin{itemize}
    \item the vertices of $G(v_1,v_2)$ equal to the set $\Pi(v_1)\cup \Pi(v_2)$;
    \item two distinct vertices $k_1$ and $k_2$ are adjacent through red (blue, respectively) color edge if $\lambda_{k_1}(v_1)=\lambda_{k_2}(v_1)$ ($\lambda_{k_1}(v_2)=\lambda_{k_2}(v_2)$, respectively).
\end{itemize}

 The graph $G(v_1,v_2)$ constructed here is first introduced by Jaming and Komornik \cite{jaming2020moving} (different but equivalent way), which is used to give some criteria for the observability of beam equations from two line segments.

 We claim that the graph $G(v_1,v_2)$ is simple. In other words, two distinct vertices $k_1$ and $k_2$ cannot have edges of both colors simultaneously. Indeed, if
$$
    \lambda_{k_1}(v_i)=\lambda_{k_2}(v_i)
$$
holds for $i=1,2$, then we obtain
$$
    (k_1-k_2)(v_1-v_2)=0,
$$
which is impossible for $k_1\neq k_2$ {since $v_1\neq v_2$} .

A \textit{path} is a simple graph whose vertices can be arranged in a linear sequence such that two vertices are adjacent if they are consecutive in the sequence and are nonadjacent otherwise. A \textit{cycle} is a simple graph whose vertices can be arranged in a cyclic sequence in such a way that two vertices are adjacent if they are consecutive in the sequence and are nonadjacent otherwise. The \textit{length} of a path or a cycle is the number of its edges.

There are two kinds of infinite path. A path who has an initial but no terminal vertex is called a \textit{one-way infinite path}. A path who has neither initial nor terminal vertex is called a \textit{two-way infinite path}. The length of both is defined to be $\infty$.

The above notions can be found in graph theory (see \cite{bondy2008graph} for more details).

\begin{definition}
A graph is called connected if there is a path from any point to any other point in the graph.
A connected component of a graph is a  connected subgraph that is not part of any larger connected subgraph.
\end{definition}
For a graph $G(v_1,v_2)$, we define
\begin{equation}
    g(v_1,v_2):=\max_{\substack{H\subset G(v_1,v_2)\\ \text{ a connected component}}}|H|,
\end{equation}
where $|H|$ denotes the number of vertices in $H$. The quantity $g(v_1,v_2)$ is important to illustrate that,  in what circumstance the qualitative observability property fails to be a quantitative one.

\begin{definition}
 A path (both finite and infinite) is said to be $2$-colored if its edges contains both colors.
\end{definition}

We shall use the following assumptions on the symbol $p(k)$.
\begin{enumerate}

    \item[(H1)] The set $\Xi_k(v)$ has only finitely many elements for all $k\in \Z$ and $v\in \R$, i.e.,
\begin{equation}
    n_k(v)<\infty,\quad \forall k\in \Z, v\in\R.\label{eqn-original-assump}
\end{equation}
    \item[(H2)] For each $v\in \R$, there exists $N_v\in\N$ such that
    \begin{equation}
        n_k(v)\le 2,\quad \forall k\ge N_v.
    \end{equation}
\end{enumerate}

\begin{remark}\label{rem-intro-1}
(i) Hypothesis (H1) ensures that each vertex in $G(v_1,v_2)$ has only finitely many neighborhoods.

(ii) Hypothesis (H2) ensures that all vertices but finite have at most one edge for each color. This prevents one-way infinite paths and two-way infinite paths in $G(v_1,v_2)$ to being too complicated.

(iii) The dispersive equations satisfying (H1)-(H2) are rather general.
In fact, { if $p$ is a polynomial or more generally} if $p\in C^1(\R)$ and $|p'(k)|\asymp |k|^\alpha$ as $k\to \infty$ for some $\alpha>0$, then (H1) and (H2) hold.
\end{remark}

\subsection{Main results}

Our main results, Theorem~\ref{thm:main2} and Theorem~\ref{thm:main3}, concern observability of dispersive equations from $n=2$ line segments. Before presenting these two results, we first consider the case $n=1$, namely the observability with observation from one line segment.
The {following} result describes the slope $v$ to ensure the observability inequality.
\begin{proposition}\label{thm:main1}
 Assume that $v\in \R,\gamma'(v)>0$. Then for every $(t_0,x_0)\in \R\times \T$, the observability inequality
\begin{align}\label{equ-ob-one-line}
\int_\T|u_0(x)|^2\d x\lesssim  \int_0^T|u(t_0+t,x_0-vt)|^2\d t
\end{align}
holds for all $T>\frac{2\pi}{\gamma'(v)}$ and all solutions to \eqref{eqn-abstract} if and only if $\Pi(v) = \O$.
\end{proposition}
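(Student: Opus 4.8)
The plan is to read off both implications from the nonharmonic Fourier series representation \eqref{eqn-one-point-representation} of the observation $t\mapsto u(t_0+t,x_0-vt)$, combined with the Ingham-type inequality attached to the asymptotic gap $\gamma'(v)$ defined in \eqref{equ-gamma(v)-1}. For the sufficiency ($\Pi(v)=\O$), note first that $\Pi(v)=\O$ is exactly the statement that the frequencies $\{\lambda_k(v)\}_{k\in\Z}$ are pairwise distinct. Hence $k\mapsto \lambda_k(v)$ is injective, and \eqref{eqn-one-point-representation} presents the observation as a genuine nonharmonic Fourier series with coefficients $a_k:=c_ke^{i(p(k)t_0+kx_0)}$, which satisfy $|a_k|=|c_k|$. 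Since $\gamma'(v)>0$, I would invoke the generalized Ingham inequality in the form matching \eqref{equ-gamma(v)-1} (see \cite{komornik2005fourier}) to obtain, for every $T>2\pi/\gamma'(v)$, the lower estimate $\int_0^T|u(t_0+t,x_0-vt)|^2\,\d t\gtrsim \sum_{k}|a_k|^2=\sum_k|c_k|^2$. Parseval's identity gives $\sum_k|c_k|^2\asymp\|u_0\|_{L^2(\T)}^2$, which is precisely \eqref{equ-ob-one-line}.

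For the necessity I would argue by contraposition and exhibit a datum invisible from the segment. If $\Pi(v)\neq\O$, pick distinct $k,m\in\Z$ with $\lambda_k(v)=\lambda_m(v)$ and set $u_0(x):=e^{-i(p(k)t_0+kx_0)}e^{ikx}-e^{-i(p(m)t_0+mx_0)}e^{imx}$. Since $k\neq m$ this $u_0$ is nonzero in $L^2(\T)$, while inserting its only two nonvanishing Fourier coefficients into \eqref{eqn-one-point-representation} leaves the cancelling pair $e^{i\lambda_k(v)t}-e^{i\lambda_m(v)t}\equiv 0$. Thus the right-hand side of \eqref{equ-ob-one-line} vanishes although the left-hand side is strictly positive, so \eqref{equ-ob-one-line} cannot hold for any $T$.

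I expect the only genuine subtlety to lie in the sufficiency step, namely citing the correct version of Ingham's inequality: because $\gamma'(v)$ is an \emph{asymptotic} rather than uniform gap, the theorem used must be the one that tolerates discarding finitely many close exponents while keeping the threshold $T>2\pi/\gamma'(v)$ sharp. Should a ready reference be unavailable, I would split $\Lambda(v)$ into a finite set $W$ nearly realizing the supremum in \eqref{equ-gamma(v)-1} and its complement, apply the classical uniform-gap Ingham inequality to the complement, and control the finitely many remaining exponentials via the linear independence of distinct characters on $(0,T)$; combining the two estimates without spoiling the sharp time is the one place that requires care. The necessity direction carries no such difficulty, and in fact the same $u_0$ simultaneously defeats the qualitative unique continuation property \eqref{eqn-qualitative}.
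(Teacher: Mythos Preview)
Your proposal is correct and essentially identical to the paper's proof (Proposition~\ref{thm-sec2-main-1}\,(ii)--(iii)): the sufficiency is exactly the Kahane--Haraux form of Ingham's inequality (Theorem~\ref{thm-ingham-2}) applied to the distinct family $\{\lambda_k(v)\}_{k\in\Z}$, and your counterexample for the necessity is verbatim the one the paper uses. Your remark that $\Pi(v)=\O$ together with $\gamma'(v)>0$ forces uniform separation of $\{\lambda_k(v)\}$ (so that Theorem~\ref{thm-ingham-2} applies as stated) is the one observation worth making explicit, and the paper does exactly that.
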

The proof of Proposition~\ref{thm:main1} is easy, which follows from the Ingham's inequality directly. According to Proposition~\ref{thm:main1}, the observability \eqref{equ-ob-one-line} fails if $\Pi(v) \neq \O$. Also, the observability inequality from two line segments
\begin{align}\label{equ-ob-two-line}
\int_\T|u_0(x)|^2\d x\lesssim  \int_0^T\sum_{i=1}^2|u(t_0+t,x_0-v_it)|^2\d t
\end{align}
holds if one of $\Pi(v_1), \Pi(v_2)$ is empty. Thus it is natural to ask, whether \eqref{equ-ob-two-line} holds if $\Pi(v_1)\neq \O$ and $\Pi(v_2)\neq \O$? We answer the question in Theorem~\ref{thm:main2}. Indeed, we give the necessary and sufficient conditions on $v_1,v_2$ for both qualitative and quantitative observability of \eqref{eqn-abstract} as follows.
\begin{theorem} \label{thm:main2}
Let $(t_1,x_1),(t_2,x_2)\in \R\times \T$,  $v_1,v_2\in\R$, $v_1\neq v_2$, $\gamma'(v_1)>0,\gamma'(v_2)>0$, and $T>{2\pi} /{\min\bigl\lbrace\gamma'(v_1),\gamma'(v_2)\bigr\rbrace}$. Assume that {\rm{(H1)}} and {\rm{(H2)}} hold.
\begin{enumerate}
    \item [(i)] The qualitative observability 
{ $$
     u(t_i+t,x_i-v_it)=0,\forall i=1,2,t\in (0,T)\Longrightarrow u_0=0
 $$
 holds  for all solutions $u(t,x)$ of \eqref{eqn-abstract} } if and only if the graph $G(v_1,v_2)$ has no two-colored cycle.
    \item [(ii)] The quantitative observability 
    { 
    $$
    	\int_\T|u_0(x)|^2\d x\lesssim \int_0^T\sum_{i=1}^2 |u(t_i+t,x_i- v_i t)|^2\d  t
    $$
   holds for all solutions $u(t,x)$ of \eqref{eqn-abstract}
}    
    if and only if the graph $G(v_1,v_2)$ has no two-colored cycle and $g(v_1,v_2)<\infty$.
\end{enumerate}
\end{theorem}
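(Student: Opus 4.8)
The plan is to convert both observability problems into a single statement about a bounded linear map built from the Fourier coefficients, and then to read off the answer from the connected components of $G(v_1,v_2)$. First I would reduce the integral observations to \emph{group sums}. Writing $\omega_k^{(i)}:=e^{i(p(k)t_i+kx_i)}$ and, for $\mu\in\Lambda(v_i)$,
\[
 S_\mu^{(i)}:=\sum_{k:\lambda_k(v_i)=\mu}c_k\,\omega_k^{(i)},
\]
the representation \eqref{eqn-one-point-representation} shows that $u(t_i+t,x_i-v_it)=\sum_{\mu\in\Lambda(v_i)}S_\mu^{(i)}e^{i\mu t}$ is a nonharmonic Fourier series in $t$ whose frequencies are the \emph{distinct} elements of $\Lambda(v_i)$. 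Since $\gamma'(v_i)>0$ these frequencies are uniformly separated after deleting a finite set, so for $T>2\pi/\min\{\gamma'(v_1),\gamma'(v_2)\}$ the Ingham inequality (exactly as in Proposition~\ref{thm:main1}, treating the finite cluster of near-coincident frequencies separately) gives
\[
 \int_0^T\sum_{i=1}^2|u(t_i+t,x_i-v_it)|^2\d t\;\asymp\;\sum_{i=1}^2\sum_{\mu\in\Lambda(v_i)}\bigl|S_\mu^{(i)}\bigr|^2 .
\]
Because $\|u_0\|_{L^2}^2\asymp\sum_k|c_k|^2$, the qualitative property \eqref{eqn-qualitative} becomes injectivity of the map $\Phi:(c_k)\mapsto(S_\mu^{(i)})_{i,\mu}$ on $\ell^2$, while the quantitative property \eqref{eqn-quantitative} becomes the statement that $\Phi$ is bounded below.

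Next I would localize to the graph. If $k\notin\Pi(v_1)\cup\Pi(v_2)$ then $k$ is the only index in its class for at least one color, so $c_k$ appears alone as some $S_\mu^{(i)}=c_k\omega_k^{(i)}$; such indices are therefore controlled directly and may be discarded. As each constraint only couples indices of a single collision class, and a class lies inside one connected component, the operator $\Phi$ splits as an orthogonal sum $\bigoplus_H\Phi_H$ over the connected components $H$ of $G(v_1,v_2)$ (plus the harmless off-graph part). Hence both problems reduce to a component-by-component analysis, the essential point being that an index can carry a nonzero coefficient only if it is a singleton in neither color, i.e. only if it lies in $\Pi(v_1)\cap\Pi(v_2)$, and that a vanishing coefficient propagates across every size-$2$ class (edge).

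For the qualitative statement I would classify the components. Using {\rm(H1)}-{\rm(H2)}, outside a finite exceptional set every vertex has at most one edge of each color, so away from finitely many vertices the components are alternating paths and cycles. A finite or one-way-infinite path has a degree-one endpoint, which is a singleton in one color and is thus forced to zero; the zero then propagates along the whole path. A two-way-infinite path carries no forced zero, but each edge relation multiplies the coefficient by a unimodular factor, so $|c_k|$ is constant along the path and square-summability forces $c\equiv0$. The decisive case is a cycle: the cyclic system is solvable nontrivially exactly when the product of the edge ratios equals $1$. Computing that product with $\omega_k^{(i)}=e^{i(p(k)t_i+kx_i)}$ and the collision identity $p(k_j)-p(k_{j+1})=(k_j-k_{j+1})v$ valid for an edge of slope $v\in\{v_1,v_2\}$, the phase telescopes to $S(v_1-v_2)$, where $S$ is the alternating sum of the index differences over the red edges; summing the same identities around the cycle gives $S(v_1-v_2)=\sum_j\bigl(p(k_j)-p(k_{j+1})\bigr)=0$, hence $S=0$ and the product equals $1$ precisely for a two-colored (necessarily even) cycle. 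A monochromatic cycle, by contrast, consists of indices that are singletons in the other color and is killed. Therefore $\Phi$ is injective if and only if $G(v_1,v_2)$ has no two-colored cycle, which is part (i).

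For the quantitative statement, $\Phi$ is bounded below iff every $\Phi_H$ is injective and the lower bounds are uniform in $H$. Injectivity of all $\Phi_H$ is again ``no two-colored cycle''. If some component is infinite, then $g(v_1,v_2)=\infty$ and it contains arbitrarily long stretches of a path; placing unimodular weights along such a stretch so that all interior edge relations cancel leaves only the two endpoint relations nonzero, producing vectors with $\|\Phi_H c\|/\|c\|\to0$, so $\Phi$ cannot be bounded below. Conversely, if $g(v_1,v_2)<\infty$ every component has at most $g$ vertices, hence there are only finitely many combinatorial types; for each type the least singular value of $\Phi_H$ is a continuous function of the phases $(\omega_k^{(i)})$ on a compact torus and is strictly positive (by the injectivity valid for all phase values), so it is bounded away from zero uniformly. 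Together with the direct control of off-graph indices this yields the global lower bound, giving part (ii). I expect the main obstacle to be precisely this uniformity, combined with the careful treatment of the finite exceptional region, where classes of size $>2$ break the clean path/cycle picture and one must argue directly, by the same singleton-elimination mechanism, that every index which is a singleton in some color is eliminated.
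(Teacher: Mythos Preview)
Your proposal is correct and follows essentially the same strategy as the paper: reduce via Ingham to the group sums $S_\mu^{(i)}$ (the paper's $d_{i,k}$ in Proposition~2.4~(iv)), decompose by connected components, eliminate singletons and propagate along paths (the paper's Lemma~3.3 and the reduced graph $\widetilde G(v_1,v_2)$), use $|c_k|=\text{const}$ plus square-summability on infinite paths, and obtain uniformity in part~(ii) from the finiteness of component types combined with a compactness argument. Two cosmetic differences are worth noting: for the ``only if'' direction of~(i) you compute the cycle obstruction via the telescoping identity $S(v_1-v_2)=\sum_j\bigl(p(k_j)-p(k_{j+1})\bigr)=0$, whereas the paper builds the counterexample directly from the intersection point $(t_0,x_0)$ of the two observation lines (your computation is exactly what makes their choice work); and for the uniform lower bound in~(ii) your continuity-on-a-compact-torus argument makes explicit what the paper states somewhat tersely as ``a connected component with at most $M$ vertices has only finitely many possible forms''.
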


\begin{remark}\label{rmk2} (i) The restriction $\gamma'(v)>0$ in Theorem~\ref{thm:main2} holds for  { many} kinds of dispersive equations. In particular, if $p(k)$ is a polynomial with degree $\mathrm{deg} \,p\geq 2$, then  we have $\gamma'(v)=\infty$ for every $v\in \R$, see Section~\ref{sec:4}.

(ii) By setting $p(k)=k^2$ in \eqref{eqn-abstract}, it is easy to see that the graph $G(v_1,v_2)$  has no two-colored cycle for different real numbers $v_1\neq v_2$, and $G(v_1,v_2)$ has a two-way infinite path for different integers $v_1\neq v_2\in \Z$. Thus, Theorem~\ref{thm:main2} recovers the Jaming-Komornik's results \cite{jaming2020moving}:
The qualitative observability \eqref{eqn-qualitative} of the Schr\"{o}dinger equation \eqref{first} holds for all $v_1,v_2\in\R$, $v_1\neq v_2$; but the quantitative observability \eqref{eqn-quantitative} fails for all $v_1,v_2\in\Z, v_1\neq v_2$.

(iii) The proof of Theorem~\ref{thm:main2} also relies on the Ingham's inequality. The central task here is to exploit the interaction of the two line segments' contribution in the observability inequality. To this end, the language of graph theory is widely used.
The novelty of Theorem~\ref{thm:main2} lies in two folds. First, it applies to a rather large number of dispersive equations, see {Remark~\ref{rem-intro-1} (iii)}. Second, it {clarifies} the essential difference between  the qualitative observability and the quantitative observability.

(iv) For the observability from $n\ge 3$ segments, based on our approach, we need to construct the graph with $n\ge 3$ kinds of colored edges. It is much more difficult to characterize the two properties in the language of graph theory.
\end{remark}

Now we apply Theorem~\ref{thm:main2} to more concrete equations.
\begin{theorem}\label{thm:main3}
Let $p$ be a real polynomial with degree $\mathrm{deg} \, p>2$. Let $T>0$, $(t_1,x_1),(t_2,x_2)\in \R\times \T$ and $v_1,v_2\in\R,v_1\neq v_2$.  Then the following are equivalent:
\begin{enumerate}
    \item[\rm{(i)}]   The following qualitative observability holds
    { $$
    	u(t_i+t,x_i-v_it)=0,\forall i=1,2,t\in (0,T)\Longrightarrow u_0=0.
    	$$   }
    \item[\rm{(ii)}]   The following quantitative observability  holds
    { 
    	$$
    	\int_\T|u_0(x)|^2\d x\lesssim \int_0^T\sum_{i=1}^2 |u(t_i+t,x_i- v_i t)|^2\d  t.
    	$$  }
    \item[\rm{(iii)}]  The graph $G(v_1,v_2)$ has no two-colored cycle.
\end{enumerate}
\end{theorem}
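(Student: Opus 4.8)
The plan is to read Theorem~\ref{thm:main3} as the specialization of Theorem~\ref{thm:main2} to polynomial symbols, the only genuinely new input being that for $\deg p>2$ the side condition $g(v_1,v_2)<\infty$ needed for quantitative observability is automatic. \textbf{Step 1 (applicability).} Since $p$ is a polynomial with $\deg p\ge 2$, hypotheses (H1)--(H2) hold by Remark~\ref{rem-intro-1}(iii), and $\gamma'(v)=\infty$ for every $v\in\R$ by Remark~\ref{rmk2}(i) (cf. Section~\ref{sec:4}). In particular $\gamma'(v_1),\gamma'(v_2)>0$ and $2\pi/\min\{\gamma'(v_1),\gamma'(v_2)\}=0$, so the time threshold in Theorem~\ref{thm:main2} is vacuous and that theorem applies for \emph{all} $T>0$, exactly the range claimed here.

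\textbf{Step 2 (reduction).} The implication (ii)$\Rightarrow$(i) is immediate, since if the right-hand side of the observability inequality is zero then $\|u_0\|_{L^2(\T)}=0$. Theorem~\ref{thm:main2}(i) gives (i)$\Leftrightarrow$(iii). Thus everything reduces to (iii)$\Rightarrow$(ii), and by Theorem~\ref{thm:main2}(ii) this reduces further to the single statement: \textbf{Key Lemma.} \emph{If $\deg p>2$, then $g(v_1,v_2)<\infty$ for all real $v_1\ne v_2$.} Once the Lemma is available, Theorem~\ref{thm:main2}(ii) becomes ``quantitative observability $\Leftrightarrow$ $G(v_1,v_2)$ has no two-coloured cycle'', which coincides with the criterion for (i); hence (i), (ii), (iii) are pairwise equivalent and the theorem follows.

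\textbf{Step 3 (heart of the matter).} To prove the Key Lemma I would first record that a monochromatic collision for slope $v$, i.e. $\lambda_k(v)=\lambda_m(v)$ with $k\ne m$, is the same as $q_v(k)=q_v(m)$ where $q_v(x):=p(x)-vx$, a real polynomial of degree $d:=\deg p$ whose two top coefficients $c_d,c_{d-1}$ are those of $p$ (the slope $v$ only alters the linear term). The crucial claim is: \emph{for $d>2$ at most one value of $v$ can have $|\Pi(v)|=\infty$.} If $d$ is odd, then the homogeneous leading part $\tfrac{x^d-y^d}{x-y}$ of the divided difference $\tfrac{q_v(x)-q_v(y)}{x-y}$ has no nontrivial real zero (hence is definite), so $q_v(k)=q_v(m)$ with $k\ne m$ forces $k,m$ to stay bounded; thus $|\Pi(v)|<\infty$ for every $v$ and $G(v_1,v_2)$ is finite outright. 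If $d$ is even, a level set $\{q_v=L\}$ with $|L|$ large has exactly two real roots of large modulus, one positive and one negative, so any unbounded family of collisions must have $k+m$ approaching the single constant $\sigma_0:=-2c_{d-1}/(dc_d)$ determined by $p$ alone. Having infinitely many \emph{integer} collisions then forces $\sigma_0\in\Z$ and the polynomial identity $q_v(x)\equiv q_v(\sigma_0-x)$, i.e. the reflection symmetry of $q_v$ about $x=\sigma_0/2$; differentiating at $\sigma_0/2$ yields $q_v'(\sigma_0/2)=0$, that is $v=p'(\sigma_0/2)$. Since $\sigma_0$ depends only on $p$, this pins $v$ to a single value, proving the claim.

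\textbf{Step 4 (bookkeeping and obstacle).} Granting the claim, assume without loss of generality $|\Pi(v_2)|<\infty$, so there are only finitely many blue edges. By (H2) together with the reflection symmetry, all but finitely many vertices carry exactly one red edge (to their partner $\sigma_0-k$) and no blue edge, hence lie in a connected component of size $2$; the finitely many exceptional vertices (those with $n_k\ge 3$, those incident to a blue edge, and those in the bounded region) have finitely many neighbours by (H1), so the components containing them are finite. Therefore $g(v_1,v_2)<\infty$. I expect Step~3---specifically the even-degree rigidity forcing the reflection symmetry and hence the uniqueness of the ``bad'' slope---to be the main obstacle; the remaining steps are bookkeeping on top of Theorem~\ref{thm:main2}. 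It is worth noting that this is exactly where $\deg p>2$ is indispensable: when $d=2$ the leading form degenerates to $x+y$ and $q_v$ is symmetric about its vertex for \emph{every} $v$, so both slopes may have $|\Pi(v)|=\infty$ and $g(v_1,v_2)=\infty$, which is precisely the Schr\"odinger phenomenon of Remark~\ref{rmk2}(ii).
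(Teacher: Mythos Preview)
Your proposal is correct, and for odd degree it coincides with the paper's argument (definiteness of the leading homogeneous form $\frac{x^d-y^d}{x-y}$ forces $|\Pi(v)|<\infty$ for every $v$, so $G(v_1,v_2)$ is finite). For even degree you take a genuinely different route. The paper argues \emph{geometrically}: the planar curves $C(v):\frac{p(x)-p(y)}{x-y}=v$ share, for all $v$, the same asymptotic line $x+y=\sigma_0$, and since $C(v_1)\cap C(v_2)=\varnothing$ the horizontal distance between them at height $y=k'$ tends to $0$; hence in a long alternating path three consecutive large vertices $k,k',k''$ would give integer points $(k,k'),(k'',k')$ on the two curves with $|k-k''|<1$, a contradiction. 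You argue \emph{algebraically}: infinitely many integer collisions force, by pigeonhole on the bounded integer $k+m$, a polynomial identity $q_v(x)\equiv q_v(r-x)$, and matching the $x^{d-1}$ coefficients pins $r=\sigma_0=-2c_{d-1}/(dc_d)$ while $q_v'(\sigma_0/2)=0$ pins $v=p'(\sigma_0/2)$, so at most one slope can be ``bad''. Your approach yields more information (an explicit description of the unique possible bad slope and the reason $d=2$ fails: for quadratics the symmetry holds for \emph{every} $v$), whereas the paper's argument is more direct and avoids the Step~4 bookkeeping.

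Two small points. First, Step~4 is a bit loose: ``have finitely many neighbours, so the components containing them are finite'' is not by itself a valid inference. The clean way is to note that every non-exceptional vertex has degree $1$, hence is a leaf; therefore any component meeting the exceptional set $E$ is contained in $E\cup N(E)$, which is finite by (H1). Second, you invoke $\gamma'(v)=\infty$ by citing Remark~\ref{rmk2}(i), but that remark points forward to Section~\ref{sec:4}, i.e.\ to the very proof under discussion; you should supply (or at least sketch) an independent argument, which amounts to showing that $|(k-m)\cdot h(k,m)|\to\infty$ off the collision locus, as the paper does via a case split on whether $|k+m|$ is large.
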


In Theorem~\ref{thm:main3} we show that, if $\mathrm{deg} \,p>2$, the qualitative observability and the quantitative observability are equivalent. However, the equivalence fails (see Jaming-Komornik's results \cite{jaming2020moving} mentioned above) {for} the linear Schr\"{o}dinger equation \eqref{first}, which satisfies \eqref{eqn-abstract} with $\mathrm{deg } \,p=2$. This gives a difference between the case $\mathrm{deg } \,p>2$ and $\mathrm{deg } \,p=2$.

Moreover, applying Theorem~\ref{thm:main3} to higher-order Schr\"{o}dinger equations and KdV equations, we obtain two typical examples.
\begin{itemize}
    \item Consider the higher-order linear Schr\"{o}dinger equation of the form
    \begin{equation*}
         \partial_tu=i(-\partial_x^2u+\partial_x^4 u+\cdots+(-1)^{l}\partial_x^{2l}u).
    \end{equation*}
    We prove that the quantitative observability always holds for all distinct $v_1,v_2\in \R$ (see Theorem~\ref{thm-application-1}). In other words, the observability inequality holds with observation from any two  non-parallel line segments.
    \item Consider the linear KdV equation of the form
    \begin{equation*}
        \partial_t u+\partial_x^3u=0.
    \end{equation*}
    Different from previous case, two non-parallel line segments is not sufficient for the observability anymore. However, we give two criteria on $v_1,v_2$ for the observability inequality: (1) $v_2>4v_1$; (2) some restrictions in terms of number theory, see Theorem~\ref{thm-application-2} for details.
\end{itemize}

\subsection{Outline of the work}

The paper is organized as follows. In Section~\ref{sec:2} we recall Ingham's inequality and prove Proposition~\ref{thm:main1}. We prove Theorem~\ref{thm:main2} and Theorem~\ref{thm:main3} in Section~\ref{sec:3} and Section \ref{sec:4}, respectively. Finally, we apply our results to Schr\"{o}dinger type equations and the KdV equation in Section~\ref{sec:5}, where we obtain Theorem~\ref{thm-application-1} and Theorem~\ref{thm-application-2}.

\section{Ingham inequality}\label{sec:2}
In this section, we first introduce some Ingham's theorems on non-harmonic Fourier series. Then we apply them to prove Proposition~\ref{thm:main1}.

\begin{definition}
A set of real numbers $\Lambda=\bigl\lbrace\lambda_k\bigr\rbrace_{k\in J}$  is called uniformly separated if
\begin{equation*}
    \gamma(\Lambda):=\inf_{\substack{k,n\in J\\k\neq n}}|\lambda_k-\lambda_n|>0.
\end{equation*}
The quantity $\gamma(\Lambda)$ is called the uniform gap of $\Lambda$.
\end{definition}
Ingham \cite{ingham1936some} proved the following important theorem.
\begin{theorem}[{\bf Ingham}]\label{thm-ingham-1}
Let $\Lambda=\lbrace \lambda_k \rbrace_{k\in J}\subset \R$ be uniformly separated.
\begin{enumerate}
    \item[\rm{(i)}] For every bounded interval $I$, the direct inequality
    \begin{equation*}
        \int_I|f(x)|^2\d x\lesssim \sum_{k\in J}|c_k|^2
    \end{equation*}
    holds for all functions of the form $f(x)= \sum_{k\in J}c_ke^{i\lambda_k x}$ with square-summable coefficients $c_k$.
    \item[\rm{(ii)}] If the interval $I$ has length $|I|>\frac{2\pi}{\gamma(\Lambda)}$, then the inverse inequality
    \begin{equation}
        \sum_{k\in J}|c_k|^2\lesssim \int_I| f(x) |^2 \d x\label{eqn-inverse-inequality}
    \end{equation}
    holds for all functions of the form $f(x)= \sum\limits_{k\in J}c_ke^{i\lambda_k x}$ with square-summable coefficients $c_k$.
\end{enumerate}
\end{theorem}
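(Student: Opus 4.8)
The plan is to prove both inequalities by the classical weight-function method, reducing each $L^2$-integral to a bilinear form in the coefficients $(c_k)$ whose entries are the Fourier transform of an auxiliary weight evaluated at the frequency differences $\lambda_k-\lambda_n$. Write $\gamma:=\gamma(\Lambda)>0$. The uniform separation is precisely what makes the off-diagonal contributions summable: for fixed $k$, the $j$-th nearest frequency on either side of $\lambda_k$ lies at distance at least $j\gamma$. Throughout I would first translate $I$ to a symmetric interval $(-T,T)$; since shifting $I$ by $a$ only multiplies each $c_k$ by the unimodular factor $e^{i\lambda_k a}$, neither $\int_I|f|^2\d x$ nor $\sum_k|c_k|^2$ changes, so there is no loss of generality.

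For the direct inequality (i) I would pick a nonnegative weight $H\ge \mathbf 1_I$ supported on a bounded symmetric interval and of class $C^2$, so that $|\widehat H(s)|\lesssim(1+|s|)^{-2}$. Expanding
\begin{equation*}
 \int_I|f(x)|^2\d x\le\int H(x)|f(x)|^2\d x=\sum_{k,n}c_k\overline{c_n}\,\widehat H(\lambda_n-\lambda_k),
\end{equation*}
I would then apply Schur's test: the right-hand side is at most $\bigl(\sup_k\sum_n|\widehat H(\lambda_n-\lambda_k)|\bigr)\sum_k|c_k|^2$, and the supremum is finite because the quadratic decay of $\widehat H$ together with the separation bound above dominates $\sum_n|\widehat H(\lambda_n-\lambda_k)|$ by a convergent series uniformly in $k$. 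This needs no lower bound on $|I|$, which is exactly why (i) holds for every bounded interval.

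The inverse inequality (ii) is the crux, and it is where the threshold $|I|=2T>2\pi/\gamma$ (equivalently $T\gamma>\pi$) is forced. I would use Ingham's cosine kernel $H(x)=\cos(\pi x/(2T))$ on $[-T,T]$ (and $H\equiv0$ outside), for which $\|H\|_\infty=1$ and a direct computation gives $\widehat H(0)=4T/\pi>0$ together with the decreasing majorant $|\widehat H(s)|\le 4\pi T/(4T^2s^2-\pi^2)$ for $|s|>\pi/(2T)$. Since $\|H\|_\infty=1$ we have $\int_{-T}^{T}H|f|^2\d x\le\int_{-T}^{T}|f|^2\d x$, so it suffices to bound the bilinear form from below. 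Separating the diagonal,
\begin{equation*}
 \sum_{k,n}c_k\overline{c_n}\,\widehat H(\lambda_n-\lambda_k)\ge\widehat H(0)\sum_k|c_k|^2-\sum_{k\neq n}|c_k|\,|c_n|\,|\widehat H(\lambda_n-\lambda_k)|,
\end{equation*}
I would control the off-diagonal part using $|c_k||c_n|\le\tfrac12(|c_k|^2+|c_n|^2)$ and the distance bound $|\lambda_n-\lambda_k|\ge j\gamma$, giving
\begin{equation*}
 \sum_{n\neq k}|\widehat H(\lambda_n-\lambda_k)|\le 2\sum_{j\ge1}\frac{4\pi T}{4T^2j^2\gamma^2-\pi^2}=\frac{4T}{\pi}\bigl(1-\pi z\cot\pi z\bigr),\qquad z:=\frac{\pi}{2T\gamma},
\end{equation*}
where the closed form follows from $\sum_{j\ge1}(j^2-z^2)^{-1}=(1-\pi z\cot\pi z)/(2z^2)$. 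The factor $\theta:=1-\pi z\cot\pi z$ lies in $(0,1)$ exactly when $z<1/2$, i.e. when $T\gamma>\pi$; hence the off-diagonal sum is at most $\theta\,\widehat H(0)\sum_k|c_k|^2$ and
\begin{equation*}
 \int_{-T}^{T}|f(x)|^2\d x\ge(1-\theta)\,\widehat H(0)\sum_k|c_k|^2,
\end{equation*}
which is \eqref{eqn-inverse-inequality}.

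The main obstacle is the sharp off-diagonal estimate in (ii): one must show that, under $T\gamma>\pi$, the total off-diagonal mass is strictly dominated by the diagonal term $\widehat H(0)$. This is where the precise shape of the cosine kernel and the closed-form evaluation of the comparison series enter, and where the threshold $|I|>2\pi/\gamma$ appears through the sign of $\cot\pi z$; any softer majorant loses the sharp constant and only yields the inequality for $|I|$ larger than some unspecified multiple of $1/\gamma$. By contrast, part (i) requires only summability of $\widehat H$ against a uniformly separated set and presents no genuine difficulty.
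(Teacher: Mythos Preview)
The paper does not prove this theorem: it is stated as Ingham's classical result with a citation to \cite{ingham1936some} and is used as a black box thereafter. Your proposal reproduces essentially Ingham's original weight-function argument---a smooth majorant for the direct inequality and the cosine kernel $H(x)=\cos(\pi x/(2T))\mathbf 1_{[-T,T]}$ for the inverse inequality---and the computations (the value $\widehat H(0)=4T/\pi$, the majorant $|\widehat H(s)|\le 4\pi T/(4T^2s^2-\pi^2)$, and the closed-form evaluation via $\sum_{j\ge1}(j^2-z^2)^{-1}$) are correct, yielding the sharp threshold $|I|>2\pi/\gamma$. There is nothing to compare against in the paper itself; your argument is the standard one and is sound.
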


The condition $|I|>\frac{2\pi}{\gamma(\Lambda)}$ can be weakened in some circumstance. {Kahane \cite{Kahane1962Pse}  and} Haraux \cite{haraux1989series} proved the following generalized Ingham's Theorem.
\begin{theorem}[{\bf  Kahane-Haraux}]\label{thm-ingham-2}
Let $\Lambda=\bigl\lbrace \lambda_k\bigr\rbrace_{k\in J}$ be a uniformly separated family of real numbers. Then the inverse inequality \eqref{eqn-inverse-inequality} holds under the condition $|I|>\frac{2\pi}{\gamma'(\Lambda)}$, where
\begin{equation}
    \gamma'(\Lambda):=\sup\limits_{\mbox{\footnotesize finite set }W\subset J}\inf\limits_{\substack{k,n\in J\backslash W\\ k\neq n}}|\lambda_k-\lambda_n|>0.\label{eqn-weak-gap}
\end{equation}
\end{theorem}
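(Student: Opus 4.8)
The plan is to bootstrap the generalized inequality from the ordinary Ingham inequality (Theorem~\ref{thm-ingham-1}) by discarding a finite set of frequencies that spoil the uniform gap and then reinstating them through a finite--dimensional perturbation. Since $|I|>2\pi/\gamma'(\Lambda)$ and $\gamma'(\Lambda)>0$, I first fix $\varepsilon>0$ so small that $|I|>2\pi/(\gamma'(\Lambda)-\varepsilon)$. By the definition \eqref{eqn-weak-gap} of $\gamma'(\Lambda)$ as a supremum over finite sets, there is a finite $W\subset J$ for which the tail $\{\lambda_k\}_{k\in J\setminus W}$ is uniformly separated with gap exceeding $\gamma'(\Lambda)-\varepsilon$. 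As $|I|>2\pi/(\gamma'(\Lambda)-\varepsilon)$, Theorem~\ref{thm-ingham-1} applies to this tail and shows that $\{e^{i\lambda_k x}\}_{k\in J\setminus W}$ is a Riesz sequence in $L^2(I)$: writing $g=\sum_{k\in J\setminus W}c_ke^{i\lambda_k x}$ one has the two--sided bound $\sum_{k\in J\setminus W}|c_k|^2\asymp\int_I|g|^2\,\d x$. Let $V\subset L^2(I)$ be the closed span of the tail exponentials.

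It remains to adjoin the finitely many exponentials $\{e^{i\lambda_k x}\}_{k\in W}$ without destroying the lower bound \eqref{eqn-inverse-inequality}. I would argue by contradiction. If the inverse inequality failed for the full family, there would be $f_n=\sum_{k\in J}c_k^{(n)}e^{i\lambda_k x}$ with $\sum_{k\in J}|c_k^{(n)}|^2=1$ but $\|f_n\|_{L^2(I)}\to0$. Split $f_n=h_n+g_n$, where $h_n=\sum_{k\in W}c_k^{(n)}e^{i\lambda_k x}$ is the $W$--part and $g_n$ is the tail part. The vector $(c_k^{(n)})_{k\in W}$ ranges over a fixed finite--dimensional space, so after passing to a subsequence it converges and $h_n\to h:=\sum_{k\in W}a_ke^{i\lambda_k x}$ in $L^2(I)$. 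Each $g_n$ lies in the closed subspace $V$, and $g_n=f_n-h_n\to-h$; since $V$ is closed, this forces $h\in V$. If one knew $h=0$, then $(c_k^{(n)})_{k\in W}\to0$ while the tail Riesz lower bound gives $\sum_{k\in J\setminus W}|c_k^{(n)}|^2\lesssim\|g_n\|_{L^2(I)}^2\to\|h\|^2=0$, so the whole coefficient vector would tend to $0$ in $\ell^2$, contradicting $\sum_{k\in J}|c_k^{(n)}|^2=1$.

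Thus everything reduces to the transversality statement that I expect to be the main obstacle: a nonzero finite combination $\sum_{k\in W}a_ke^{i\lambda_k x}$ cannot lie in $V=\overline{\operatorname{span}}\{e^{i\lambda_k x}\}_{k\in J\setminus W}$; equivalently $\operatorname{span}\{e^{i\lambda_k x}\}_{k\in W}\cap V=\{0\}$. This is delicate because an element of $V$ is only an $L^2(I)$--limit of tail combinations, so one cannot argue by differentiating or analytically continuing. The natural route is to exhibit, for each $k_0\in W$, a biorthogonal (generating) function: an entire function of exponential type at most $|I|/2$, square--integrable on the real axis, vanishing at every $\lambda_k$ with $k\neq k_0$ but not at $\lambda_{k_0}$. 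The uniform separation of $\{\lambda_k\}$ bounds its upper density, so such a canonical product of the required exponential type exists precisely in the regime $|I|>2\pi/\gamma'(\Lambda)$; pairing against it separates $e^{i\lambda_{k_0}x}$ from $V$. Granting this transversality, the elementary perturbation principle---adjoining to a Riesz sequence finitely many vectors that are linearly independent modulo its closed span again yields a Riesz sequence---upgrades the qualitative statement $h=0$ into a uniform lower bound transverse to $V$, which together with the Riesz lower bound on $V$ delivers \eqref{eqn-inverse-inequality} for the full family. A variant that sidesteps the entire--function machinery would run the finite perturbation directly on the Gram matrix, using that distinct exponentials are linearly independent on any interval and tracking the constants by hand; I expect the density/type bookkeeping in the generating--function construction to be the genuinely hard point.
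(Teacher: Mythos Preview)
The paper does not give its own proof of this theorem: it is stated as a known result and attributed to Kahane and Haraux (references \cite{Kahane1962Pse} and \cite{haraux1989series}), then used as a black box. So there is no proof in the paper to compare against.

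On its own merits, your outline is sound and you have correctly located the only nontrivial point: once you have the Riesz bounds for the tail $J\setminus W$, the whole argument hinges on the transversality $\operatorname{span}\{e^{i\lambda_k x}:k\in W\}\cap V=\{0\}$ in $L^2(I)$. Your proposed route via Paley--Wiener generating functions works, but it is heavier than necessary. The classical Haraux argument avoids entire-function machinery entirely by adding the missing frequencies \emph{one at a time} with a direct averaging trick: translate so that the new frequency is $\lambda_{k_0}=0$, set
\[
g(x)=f(x)-\frac{1}{\varepsilon}\int_0^{\varepsilon}f(x+s)\,\d s
\]
for a small $\varepsilon>0$ (chosen so that the interval shrinks only slightly), observe that the $k_0$-term cancels while the remaining coefficients are multiplied by $1-\dfrac{e^{i\lambda_k\varepsilon}-1}{i\lambda_k\varepsilon}$, which is bounded away from $0$ for $|\lambda_k|\ge\gamma(\Lambda)$, and then apply the already-established inverse inequality for $J\setminus\{k_0\}$ on the slightly smaller interval. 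Iterating $|W|$ times gives the result on $I$. This is more elementary than your biorthogonal-function plan and sidesteps the density/type bookkeeping you flagged as the hard point; your compactness-plus-transversality scheme is a legitimate alternative, closer in spirit to the ``qualitative implies quantitative'' upgrades used later in the paper (e.g.\ in the proof of Theorem~\ref{thm:main2}(ii)).
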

The quantity $\gamma'(\Lambda)$ defined by \eqref{eqn-weak-gap} is called the \textit{weak uniform gap} of $\Lambda$.

\begin{remark}
Theorem~\ref{thm-ingham-2} is equivalent to say that: Given any uniformly separated $\Lambda $ and any finite subset $W\subset \Lambda$, if $|I|>\frac{2\pi}{\gamma\bigl(\Lambda\backslash W\bigr)}$, then the inverse inequality \eqref{eqn-inverse-inequality} holds. If there exists a sequence of finite subsets $\bigl\lbrace W_n \bigr\rbrace_{n\in \N}$  such that
\begin{equation}
    \gamma\bigl(\Lambda\backslash W_n\bigr)\to\infty, \quad n\to\infty,
\end{equation}
then the inverse inequality \eqref{eqn-inverse-inequality} holds  for any interval of length $|I|>0$. For example, let $\Lambda=\bigl\lbrace k^3:k\in\Z \bigr\rbrace$ and $W_n=\bigl\lbrace k^3:|k|\le n\bigr\rbrace$ for all $n\in \N$, we have $\gamma(\Lambda \backslash W_n)\rightarrow \infty$ when $n\to\infty$.
\end{remark}

For {further} generalizations of Ingham's Theorem, we refer the reader to \cite{ball1979nonharmonic} and \cite{komornik2005fourier}.
{The above results are the main tools to prove Proposition \ref{thm:main1}. In the sequel, we present a stronger proposition here in which (ii) and (iii) imply Proposition \ref{thm:main1}.}

\begin{proposition}\label{thm-sec2-main-1}
 Let $(t_0,x_0)\in \R\times \T, v\in\R$ and $\gamma'(v)>0$.
\begin{enumerate}
    \item[\rm{(i)}]  If {$\Pi(v)$ is finite}, then for every $T>0$, the inequality
    \begin{equation}
        \int_0^T|u(t_0+t,x_0-vt)|^2\d t\lesssim \int_\T |u_0(x)|^2\d x\label{eqn-thm-1-1}
    \end{equation}
    holds for all solutions of \eqref{eqn-abstract}.
    \item[\rm{(ii)}] If { $\Pi(v)$ is empty} and $T>\frac{2\pi}{\gamma'(v)}$, then the observability inequality
    \begin{equation}\label{thm-sec2-main-1-conlclusion-2}
    \int_\T|u_0(x)|^2\d x\lesssim \int_0^T|u(t_0+t,x_0-vt)|^2\d t
    \end{equation}
    holds for all solutions of \eqref{eqn-abstract}.
    \item [\rm{(iii)}]If {$\Pi(v)$ is non-empty}, then there exists a non-trivial solution satisfying
    \begin{equation*}
        u(t_0+t,x_0-vt)=0, \quad \forall t\in \R,
    \end{equation*}
    and thus the qualitative observability \eqref{eqn-qualitative} fails.
    \item[\rm{(iv)}]  For every $T>\frac{2\pi}{\gamma'(v)}$, the inequality
    \begin{equation}
        \sum_{k\in\Z} \frac{1}{n_k(v)}|d_k|^2\lesssim \int_0^T |u(t_0+t,x_0-vt)|^2\d t\label{eqn-dk}
    \end{equation}
    holds for all solutions of \eqref{eqn-abstract}, where
    \begin{equation}
    d_k=\sum_{m\in \Xi_k(v)}e^{i(p(m)t_0+mx_0)}c_m,\quad \forall k\in \Z.\label{eqn-thm-1-2}
    \end{equation}
\end{enumerate}
\end{proposition}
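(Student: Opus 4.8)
The plan is to deduce all four parts from the Ingham-type inequalities of Theorem~\ref{thm-ingham-1} and Theorem~\ref{thm-ingham-2}, applied to the one-variable function $f(t):=u(t_0+t,x_0-vt)$. The starting point is the representation \eqref{eqn-one-point-representation}. First I would regroup that series according to the distinct values of the phases $\lambda_k(v)$: writing the frequency set $\Lambda(v)$ as $\{\mu\}$ and collecting all indices $m$ with $\lambda_m(v)=\mu$, one obtains
\[
 f(t)=\sum_{\mu\in\Lambda(v)}d_\mu\,e^{i\mu t},\qquad d_\mu=\sum_{m:\,\lambda_m(v)=\mu}e^{i(p(m)t_0+mx_0)}c_m ,
\]
so that $f$ is a genuine nonharmonic Fourier series with \emph{pairwise distinct} frequencies $\mu$, and the coefficient attached to the class of $k$ is exactly the number $d_k$ of \eqref{eqn-thm-1-2}; in particular $d_k=d_m$ and $n_k(v)=n_m(v)$ whenever $m\in\Xi_k(v)$. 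A short preliminary observation I would record is that the hypothesis $\gamma'(v)>0$ already forces $\Lambda(v)$ to be uniformly separated: since some finite set can be deleted to leave a gap bounded below, only finitely many pairs of frequencies can be closer than that bound, and these finitely many gaps are positive because the $\mu$ are distinct. This makes Theorem~\ref{thm-ingham-1} and the Kahane--Haraux Theorem~\ref{thm-ingham-2} directly applicable to $f$, with weak uniform gap equal to $\gamma'(v)$.

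The core is part (iv). The crucial step is the bookkeeping identity
\[
 \sum_{k\in\Z}\frac{1}{n_k(v)}\,|d_k|^2=\sum_{\mu\in\Lambda(v)}|d_\mu|^2 ,
\]
which holds because each frequency class $\{k:\lambda_k(v)=\mu\}$ consists of $n_\mu:=n_k(v)$ indices, each contributing the equal term $\tfrac{1}{n_\mu}|d_\mu|^2$. Since $T>\frac{2\pi}{\gamma'(v)}$ and $\gamma'(v)$ is precisely the weak uniform gap of $\Lambda(v)$, the inverse inequality of Theorem~\ref{thm-ingham-2} gives $\sum_{\mu\in\Lambda(v)}|d_\mu|^2\lesssim\int_0^T|f(t)|^2\d t$, which combined with the identity is exactly \eqref{eqn-dk}.

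Parts (ii) and (iii) then follow at once. If $\Pi(v)=\O$, every class is a singleton, so $n_k(v)=1$ and $|d_k|=|c_k|$; then \eqref{eqn-dk} reads $\sum_k|c_k|^2\lesssim\int_0^T|f|^2\d t$, and since $\int_\T|u_0(x)|^2\d x=2\pi\sum_k|c_k|^2$ this is \eqref{thm-sec2-main-1-conlclusion-2}. Conversely, if $\Pi(v)\neq\O$ I would choose $k\neq m$ with $\lambda_k(v)=\lambda_m(v)$ and set $c_k=e^{-i(p(k)t_0+kx_0)}$, $c_m=-e^{-i(p(m)t_0+mx_0)}$, and all other coefficients zero. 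Then every $d_\mu$ vanishes, so $f\equiv0$, while $u_0=c_ke^{ikx}+c_me^{imx}\neq0$; this produces the claimed nontrivial solution vanishing on the whole line and hence defeats the qualitative observability \eqref{eqn-qualitative}.

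For the direct estimate (i), I would use the direct inequality of Theorem~\ref{thm-ingham-1}(i), valid on every bounded interval, to get $\int_0^T|f|^2\d t\lesssim\sum_{\mu\in\Lambda(v)}|d_\mu|^2$, and then Cauchy--Schwarz to bound $|d_\mu|^2\le n_\mu\sum_{m:\lambda_m(v)=\mu}|c_m|^2$. Because $\Pi(v)$ is finite, collisions occur only inside the finite set $\Pi(v)$ (indeed $\Xi_k(v)\subset\Pi(v)$ whenever $n_k(v)\ge2$), so the multiplicities $n_\mu$ are uniformly bounded; consequently $\sum_\mu|d_\mu|^2\lesssim\sum_k|c_k|^2\asymp\int_\T|u_0(x)|^2\d x$, which is \eqref{eqn-thm-1-1}. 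I expect the only genuinely delicate point to be the regrouping step and the careful passage between the index sum $\sum_{k\in\Z}$ weighted by $1/n_k(v)$ and the frequency sum $\sum_{\mu\in\Lambda(v)}$; once that identity and the uniform separation of $\Lambda(v)$ are secured, the remaining arguments are direct invocations of the Ingham and Kahane--Haraux inequalities.
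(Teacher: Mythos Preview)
Your proposal is correct and follows essentially the same route as the paper: regroup \eqref{eqn-one-point-representation} by the distinct frequencies $\mu\in\Lambda(v)$, observe that $\gamma'(v)>0$ makes $\Lambda(v)$ uniformly separated (a point the paper asserts without justification but which you argue explicitly), and then read off (iv), (ii), (iii) from the Kahane--Haraux inverse inequality and an explicit two-mode counterexample. The only cosmetic difference is in (i): the paper splits $\Z=W\cup(\Z\setminus W)$ and bounds the finite block crudely before applying the direct Ingham inequality to the remainder, whereas you apply the direct inequality to the already-grouped series $\sum_\mu d_\mu e^{i\mu t}$ and then use Cauchy--Schwarz together with the uniform bound $n_\mu\le\max(1,|\Pi(v)|)$; both are straightforward and yield the same estimate.
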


\begin{proof}
 (i) By the assumption $\gamma'(v)>0$ and $|\Pi(v)|<\infty$, we use the equivalent definition \eqref{equ-gamma(v)-2} of $\gamma'(v)$ to see that, the set $\{\lambda_k(v)\}_{k\in \Z \backslash W}$ is uniformly separated for some finite set $W$. Rewrite \eqref{eqn-one-point-representation} as
\begin{equation}\label{equ-proof-thm1-1}
    u(t_0+t,x_0-vt)=\sum_{k\in W} c_ke^{i(p(k)t_0+kx_0)} e^{i\lambda_k(v)t}+\sum_{k\in \Z \backslash W} c_ke^{i(p(k)t_0+kx_0)} e^{i\lambda_k(v)t}.
\end{equation}
Squaring both sides of \eqref{equ-proof-thm1-1} and applying the elementary inequality $|\sum_{1\leq k \leq m}a_k|^2\leq m\sum_{1\leq k\leq m}|a_k|^2$, we infer that
\begin{multline}\label{equ-proof-thm1-2}
    \int_0^T|u(t_0+t,x_0-vt)|^2\d t \\ \leq (|W|+1) \biggl(\int_0^T \sum_{k\in W}|c_k|^2 \d t + \int_0^T \bigl|\!\sum_{k\in \Z \backslash W} c_ke^{i(p(k)t_0+kx_0)} e^{i\lambda_k(v)t}\bigr|^2 \d t\biggr).
\end{multline}
Moreover, by Theorem~\ref{thm-ingham-1}, we have the direct inequality
$$
\int_0^T \bigl|\sum_{k\in \Z \backslash W} c_ke^{i(p(k)t_0+kx_0)} e^{i\lambda_k(v)t}\bigr|^2 \d t\lesssim \sum_{k\in \Z \backslash W}|c_k|^2.
$$
This, together with \eqref{equ-proof-thm1-2}, gives
$$
\int_0^T|u(t_0+t,x_0-vt)|^2\d t \lesssim \sum_{k\in \Z }|c_k|^2.
$$
Thus we obtain \eqref{eqn-thm-1-1} since $\|u_0\|^2_{L^2(\T)}=\sum_{k\in \Z }|c_k|^2.$
%

(ii) Since $|\Pi(v)|=0$ and $\gamma'(v)>0$, the set $\bigl\lbrace \lambda_{k}(v) \bigr\rbrace_{k\in\Z}$ is uniformly separated. The desired inequality Theorem~\ref{thm-sec2-main-1-conlclusion-2} follows directly from Theorem~\ref{thm-ingham-2}.

(iii) If $|\Pi(v)|\neq 0$, then $\Pi (v) \neq \O$. By definition, there exist two integers $k_1,k_2\in \Pi(v),k_1\neq k_2$ such that  $\lambda_{k_1}(v)=\lambda_{k_2}(v)$. Set the initial data
$$
u_0(x)=e^{-i\left(p(k_1)t_0+k_1x_0\right)}e^{ik_1x}-e^{-i\left(p(k_2)t_0 +k_2x_0\right)}e^{ik_2x}.
$$
Then the corresponding solution of \eqref{eqn-abstract} is given by
\begin{equation*}
    u(t,x)=e^{-i\left(p(k_1)t_0+k_1x_0\right)}e^{ip(k_1)t}e^{ik_1x}-e^{-i\left(p(k_2)t_0 +k_2x_0\right)}e^{ip(k_2)t}e^{ik_2x}.
\end{equation*}
Clearly, $u(t,x)$ is not identical to $0$, but
\begin{equation*}
    u(t_0+t,x_0-vt)=0,\quad\forall t\in \R.
\end{equation*}

(iv) By the definition \eqref{eqn-thm-1-2}, we can rewrite \eqref{eqn-one-point-representation} as
\begin{equation*}
    u(t_0+t,x_0-vt)=\sum_{k\in\Z} \frac{1}{n_k(v)}d_k e^{i\lambda_k(v)t}=\sum_{k\in \lbrace \lambda_j(v):j\in \Z \rbrace} d_k' e^{ikt}
\end{equation*}
where $d_k'=d_j$ for some $j$ who satisfies $\lambda_j(v)= k$.
The set $\lbrace \lambda_j(v):j\in\Z \rbrace$ is uniformly separated, then we obtain the desired inequality \eqref{eqn-dk} by Theorem~\ref{thm-ingham-2} (note that in \eqref{eqn-dk} each $d'_k,k\in \bigl\lbrace \lambda_j(v):j\in\Z\bigr\rbrace $ is counted $n_k(v)$ times).
\end{proof}

\begin{remark}
(i) Under the  assumptions of Proposition~\ref{thm-sec2-main-1} (ii), we have in fact
\begin{equation*}
    \int_\T|u_0(x)|^2\d x \asymp \int_0^T|u(t_0+t,x_0-vt)|^2\d t,
\end{equation*}
see Theorem~\ref{thm-ingham-1} (i).  By the same reason, the quantitative observability in Theorem~\ref{thm:main2} is equivalent to
\begin{equation*}
     \int_\T|u_0(x)|^2\d x \asymp \int_0^T \sum_{i=1}^2 | u(t_i+t,x_i-v_it)| ^2 \d t.
\end{equation*}

(ii)We note that \eqref{eqn-dk} can be rewritten as the following
\begin{equation}
    \sum_{k\in \Pi(v)} \frac{1}{n_k(v)}|d_k|^2+\sum_{k\in \Z\backslash\Pi(v)}|c_k|^2\lesssim \int_0^T |u(t_0+t,x_0-vt)|^2 \d t.\label{eqn-dk-ck}
\end{equation}
In fact, if $k\notin \Pi(v)$, then the set $\Xi_k(v)$ has only one element, namely $k$, so in this case, by \eqref{eqn-thm-1-2}, $|d_k|=|c_k|$. This gives \eqref{eqn-dk-ck}.
\end{remark}


\section{Proof of Theorem~\ref{thm:main2}}\label{sec:3}

 Before the proof of Theorem~\ref{thm:main2}, it is necessary to analyze the graph $G(v_1,v_2)$ and obtain some properties of it. Here and below, the graph $G(v_1,v_2)$ is always the one defined in Subsection~\ref{subsec:12}. We assume that (H1) and (H2) hold.

\begin{definition}
A path or a cycle is called alternative if any two consecutive edges have different colors.
\end{definition}

\begin{lemma}\label{thm-path-equiv}
 If the graph $G(v_1,v_2)$ contains a $2$-colored path (cycle, respectively), then  $G(v_1,v_2)$ also contains  an alternative path (alternative cycle, respectively).
\end{lemma}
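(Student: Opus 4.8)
The plan is to exploit the fundamental structural feature of $G(v_1,v_2)$: for a fixed color, adjacency is an \emph{equivalence relation}. A red edge between $k_1$ and $k_2$ means $\lambda_{k_1}(v_1)=\lambda_{k_2}(v_1)$, and since equality of the real numbers $\lambda_k(v_1)$ is transitive, the red edges partition their endpoints into cliques, namely the level sets of $k\mapsto \lambda_k(v_1)$; the same holds for the blue edges with $v_2$. The consequence I will use is a \emph{shortcut rule}: if vertices $a,b,c$ are such that $ab$ and $bc$ are edges of the same color, then (provided $a\neq c$) the pair $ac$ is also an edge of that same color, because transitivity forces the corresponding $\lambda$-values to coincide.

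Given a $2$-colored path (resp. cycle), I would first decompose it into its maximal monochromatic runs. By maximality two consecutive runs have different colors, so the runs alternate in color along the path (resp. around the cycle), and since both colors occur there are at least two runs. I then collapse each run of length $\ge 2$ to a single edge: if a run reads $w_0-w_1-\cdots-w_j$ in, say, red, then transitivity gives $\lambda_{w_0}(v_1)=\lambda_{w_j}(v_1)$, so $w_0w_j$ is a red edge of $G(v_1,v_2)$, and $w_0\neq w_j$ since they are distinct vertices of the path (resp. cycle). Replacing the run by this single edge deletes only the interior vertices $w_1,\dots,w_{j-1}$, so the remaining vertices stay distinct and we retain a valid path (resp. cycle). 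After collapsing every run, each maximal run has length $1$, which is exactly the alternative property; and since one edge of each color survives, the output is genuinely $2$-colored — an alternative path of length $\ge 2$ (resp. an alternative cycle).

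For the cycle case there is one degeneracy to exclude, namely a collapsed object of length $2$, which is not a legitimate cycle. This is where the simplicity of $G(v_1,v_2)$ established in Subsection~\ref{subsec:12} enters. If a $2$-colored cycle had exactly two maximal runs, a red arc from $w_0$ to $w_j$ and a blue arc back from $w_j$ to $w_0$, then transitivity along the red arc produces a red edge $w_0w_j$ while transitivity along the blue arc produces a blue edge $w_0w_j$; thus $w_0$ and $w_j$ would be joined by edges of both colors, contradicting simplicity. Hence a $2$-colored cycle has at least four maximal runs, so the collapsed alternative cycle has length $2s\ge 4\ge 3$ and is a genuine cycle.

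I expect the collapsing itself to be routine; the main obstacle is checking that it never degenerates. Concretely, one must verify at each step that the two endpoints of a run are distinct, that the shortcut edge really lies in $G(v_1,v_2)$, and, for cycles, that the final object has length at least three. All three points follow from the equivalence-relation (clique) structure of each color together with the simplicity of $G(v_1,v_2)$, and no metric information about the values $\lambda_k(v_i)$ beyond their equalities is required.
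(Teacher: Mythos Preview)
Your proof is correct and follows essentially the same approach as the paper's: both arguments identify the maximal monochromatic runs and collapse each one to a single edge via the transitivity of the relation $\lambda_k(v_i)=\lambda_m(v_i)$. Your treatment is in fact more careful on the cycle side, where you explicitly rule out the length-$2$ degeneracy using the simplicity of $G(v_1,v_2)$ (the paper simply says the cycle case is handled ``by the same way''); conversely, the paper makes explicit, via hypothesis~(H1), that every monochromatic run in an infinite path is finite, a point you leave implicit but which is not needed for the lemma as literally stated.
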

\begin{proof}
We first consider a $2$-colored infinite simple path $\mathcal{L}$. By the hypothesis (H1), there exists no infinite path in $\mathcal{L}$ that has edges with only one color. Thus there exists a sequences $\{k_{m_j}\}$  of edges where the vertices changes color, i.e., for $m_j\leq n\leq m_{j+1}-1$, the edges $(k_n,k_{n+1})$ all have the same color, say red while for $m_{j-1}\leq n\leq m_j-1$ and $m_{j+1}\leq n\leq m_{j+2}-1$, the edges $(k_n,k_{n+1})$ all have the opposite color, blue.  

Next observe that if the edges $(k_n,k_{n+1})$ for $m_j\leq n\leq m_{j+1}-1$ are all red then 
$$
    \lambda_{k_{m_{j}}}(v_1)=\lambda_{k_{m_j+1}}(v_1)=\cdots \lambda_{k_{m_{j+1}}}(v_1).
$$
Thus, by construction, there is a red colored edge $(k_{m_j},k_{m_{j+1}})$ in the graph and we may replace the piece of $\mathcal{L}$ given by $(k_{m_j},k_{m_{j}+1})\cdots (k_{m_{j+1}-1},k_{m_{j+1}})$ by the single red colored vertice $(k_{m_j},k_{m_{j+1}})$, see  Figure~\ref{fig:3}.
\begin{figure}[htbp]
    \centering
    \vspace{0.05\textwidth}
    \begin{tikzpicture}[scale=0.6]
        \SetVertexMath
        \GraphInit[vstyle=Simple]
        \tikzset{VertexStyle/.append style = {
minimum size = 3pt, inner sep =0pt}}
        \draw[color=Red,thick,opacity=0.6] (0,0)--(1.4,1)--(3.4,1.6);
        \draw[color=Red,thick,opacity=0.6] (5.4,1.6)--(7.4,1)--(8.8,0);
        \draw[color=Red,dashed,thick,opacity=0.6] (3.4,1.6)--(5.4,1.6);
        \draw[color=Blue,thick,opacity=0.6] (-0.7,-1)--(0,0);
        \draw[color=Blue,thick,opacity=0.6] (8.8,0)--(9.5,-1);
        \Vertex[x=0,y=0]{1}
        \Vertex[x=1.4,y=1]{2}
        \Vertex[x=3.4,y=1.6]{3}
        \Vertex[x=5.4,y=1.6]{4}
        \Vertex[x=7.4,y=1]{5}
        \Vertex[x=8.8,y=0]{6}
        \draw (0,0)node[below]{\tiny $k_{m_j}$};
        \draw (1.4,1)node[below]{\tiny $k_{m_{j}+1}$};
        \draw (3.4,1.6)node[below]{\tiny  $k_{m_j+2}$};
        \draw (5.4,1.6)node[below]{\tiny  $k_{m_{j+1}-2}$};
        \draw (7.4,1)node[below]{\tiny  $k_{m_{j+1}-1}$};
        \draw (8.8,0)node[below]{\tiny $k_{m_{j+1}}$};
        \draw (9.5,0.5)node[right]{$\Longrightarrow$};
    \end{tikzpicture}
    \begin{tikzpicture}[scale=0.6]
        \SetVertexMath
        \GraphInit[vstyle=Simple]
        \tikzset{VertexStyle/.append style = {
minimum size = 3pt, inner sep =0pt}}
        \draw[color=Red,thick,opacity=0.6] (0,0)--(8.8,0);
        \draw[color=Blue,thick,opacity=0.6] (-0.7,-1)--(0,0);
        \draw[color=Blue,thick,opacity=0.6] (8.8,0)--(9.5,-1);
        \Vertex[x=0,y=0]{1}
        \Vertex[x=8.8,y=0]{6}
        \draw (0,0)node[below]{\tiny $k_{m_j}$};
        \draw (8.8,0)node[below]{\tiny $k_{m_{j+1}}$};
        \draw (4,0)node[above]{\tiny $\lambda_{k_{m_j}}(v_1)=\lambda_{k_{m_{j+1}}}(v_1)$};
    \end{tikzpicture}
    \vspace{0.05\textwidth}
    \caption{Two consequtive vertices in the subsequence.}
    \label{fig:3}
\end{figure}
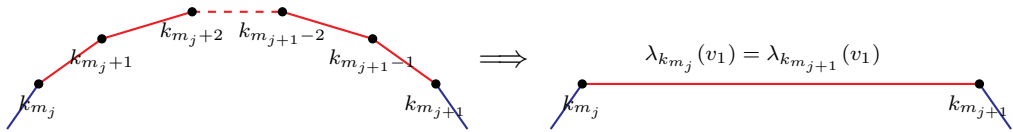
In the same way, we can replace the blue colored paths $(k_{m_{j-1}},k_{m_{j-1}+1})\cdots(k_{m_j-1},k_{m_j})$ and $(k_{m_{j+1}},k_{m_{j+1}+1})\cdots(k_{m_{j+2}-1},k_{m_{j+2}})$ by single blue colored vertices $(k_{m_{j-1}},k_{m_j})$ and $(k_{m_{j+1}},k_{m_{j+2}})$.
The resulting path is the one we are looking for.

The alternative cycle  can be constructed by the same way.
\end{proof}

 Given $(t_1,x_1),(t_2,x_2)\in \R\times \T$. Suppose  that the solution $u$ of \eqref{eqn-abstract} satisfies
 \begin{equation}
     u(t_i+t,x_i-v_it)=0,\quad \forall t\in (0,T),i=1,2, T>\frac{2\pi}{\min \lbrace\gamma'(v_1),\gamma'(v_2)\rbrace}.\label{eqn-main-thm-assump}
 \end{equation}
 Then the qualitative observability is equivalent to show $u_0 \equiv 0$, or (by \eqref{equ-data}) the Fourier coefficients $c_k=0$ for all $k\in \Z$. We present a simple criterion on $k$ to ensure $c_k=0$.  Let $k$ be a vertex in $G(v_1,v_2)$ which has only edges of one color, say red without loss of generality. Then $k\notin \Pi(v_2)$. By {Proposition}~\ref{thm-sec2-main-1} (iv) (see \eqref{eqn-dk} and \eqref{eqn-dk-ck}) and the assumption \eqref{eqn-main-thm-assump}, we obtain $c_k=0$.

The above illustration implies that for any vertex who has only edges of one color, $c_k=0$ under the assumption \eqref{eqn-main-thm-assump}. Hence we can remove these vertices and edges that are incident on them when we prove the qualitative observability \eqref{eqn-qualitative}. Then we obtain a new graph. We can remove vertices and edges from the new graph by the same operation. For a finite graph, we keep repeating the above operation again and again until it {terminates, which will happen in finite steps.} If the graph is not finite, the above operation may not terminate. To remove all these ``useless'' vertices at the same time, we need a definition of them without iteration.

\begin{definition}
Given any vertex $k$ in $G(v_1,v_2)$, a vertex $k'$ is said to be connected to $k$ in red if there exists a finite path whose initial vertex is $k'$ and terminal vertex is $k$, and the last edge is red-colored. An edge $e$ is said to be connected to $k$ in red if there exists a path whose initial vertex is an end of $e$ and terminal vertex is $k$, and the last edge is red-colored. We also definie that $k$ is connected to itself in red(see Figure~\ref{fig:4} for examples).

All these vertices and edges connected to $k$ in red form a subgraph, called the red-portion of $k$ in $G(v_1,v_2)$. The blue-portion can be defined symmetrically.

\end{definition}
\begin{figure}[htbp]
    \centering
    \begin{tikzpicture}[scale=0.5]
        \SetVertexMath
        \GraphInit[vstyle=Simple]
        \tikzset{VertexStyle/.append style = {
minimum size = 3pt, inner sep =0pt}}
        \Vertex[x=2,y=1.732]{1}
        \Vertex[x=0,y=1.732]{2}
        \draw[Blue] (1)--(2);
        \draw (1) node[below]{\tiny $k$};
        \Vertex[x=10,y=1.732]{1r}
        \draw (1r) node[below]{\tiny $k$};
        \Vertex[x=0,y=-4]{333}
        \Vertex[x=-1,y=-2.268]{444}
        \Vertex[x=0,y=-0.536]{555}
        \Vertex[x=2,y=-0.536]{666}
        \Vertex[x=3,y=-2.268]{111}
        \Vertex[x=2,y=-4]{222}
        \draw[Red,thick,opacity=0.6] (111)--(222);
        \draw[Blue,thick,opacity=0.6] (222)--(333);
        \draw[Red,thick,opacity=0.6] (333)--(444);
        \draw[Blue,thick,opacity=0.6] (444)--(555);
        \draw[Red,thick,opacity=0.6] (555)--(666);
        \draw[Blue,thick,opacity=0.6] (666)--(111);
        \draw (111) node[below]{\tiny $k$};
        \Vertex[x=0+9,y=-4]{333r}
        \Vertex[x=-1+9,y=-2.268]{444r}
        \Vertex[x=0+9,y=-0.536]{555r}
        \Vertex[x=2+9,y=-0.536]{666r}
        \Vertex[x=3+9,y=-2.268]{111r}
        \Vertex[x=2+9,y=-4]{222r}
        \draw (111r) node[below]{\tiny $k$};
        \draw[Red,thick,opacity=0.6] (111r)--(222r);
        \draw[Blue,thick,opacity=0.6] (222r)--(333r);
        \draw[Red,thick,opacity=0.6] (333r)--(444r);
        \draw[Blue,thick,opacity=0.6] (444r)--(555r);
        \draw[Red,thick,opacity=0.6] (555r)--(666r);
        \draw[Blue,thick,opacity=0.6] (666r)--(111r);
        \Vertex[x=0,y=-4-4]{33}
        \Vertex[x=-1,y=-2.268-4]{44}
        \Vertex[x=0,y=-0.536-4]{55}
        \Vertex[x=2,y=-0.536-4]{66}
        \Vertex[x=3,y=-2.268-4]{11}
        \Vertex[x=2,y=-4-4]{22}
        \Vertex[x=5,y=-2.268-4]{88}
        \draw[Red,thick,opacity=0.6] (11)--(22)--(55)--(66)--(11)--(55) (22)--(66);
        \draw[Blue,thick,opacity=0.6] (22)--(33);
        \draw[Red,thick,opacity=0.6] (33)--(44);
        \draw[Blue,thick,opacity=0.6] (44)--(55);
        \draw[Blue,thick,opacity=0.6] (11)--(88);
        \draw (11) node[below]{\tiny $k$};
        \Vertex[x=0+9,y=-4-4]{33r}
        \Vertex[x=-1+9,y=-2.268-4]{44r}
        \Vertex[x=0+9,y=-0.536-4]{55r}
        \Vertex[x=2+9,y=-0.536-4]{66r}
        \Vertex[x=3+9,y=-2.268-4]{11r}
        \Vertex[x=2+9,y=-4-4]{22r}
        \draw[Red,thick,opacity=0.6] (11r)--(22r)--(55r)--(66r)--(11r)--(55r) (22r)--(66r);
        \draw[Blue,thick,opacity=0.6] (22r)--(33r);
        \draw[Red,thick,opacity=0.6] (33r)--(44r);
        \draw[Blue,thick,opacity=0.6] (44r)--(55r);
        \draw (11r) node[below]{\tiny $k$};
        \draw (5,1.732) node[right]{$\Longrightarrow$};
        \draw (5,-2.268) node[right]{$\Longrightarrow$};
        \draw (5,-6) node[right]{$\Longrightarrow$};
        \draw (2,-9) node[below]{\small $ G(v_1,v_2)$};
        \draw (10,-9) node[below]{The red-portion of $k$};
        \draw (-1-0.5,1.732) node[left]{\small (a)};
        \draw (-1-0.5,-2.268) node[left]{\small (b)};
        \draw (-1-0.5,-6.3) node[left]{\small (c)};
    \end{tikzpicture}
    \caption{Examples of the red-portion of $k$ in $G(v_1,v_2)$}
    \label{fig:4}
\end{figure}
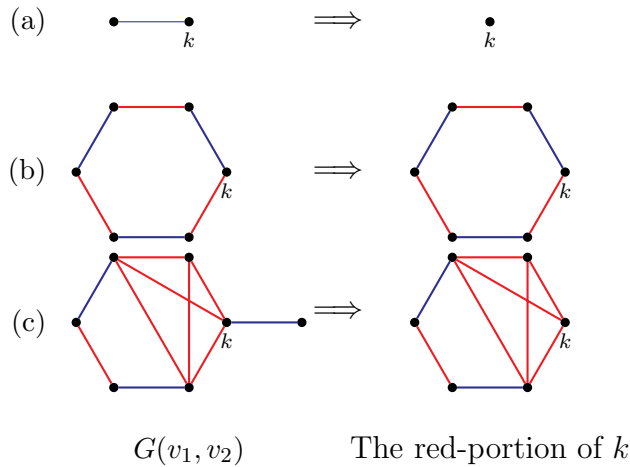

\begin{lemma}\label{thm-reduced-lemma}
 Let $k$ be a vertex in $G(v_1,v_2)$ { such that either}
\begin{enumerate}
    \item [\rm{(i)}] The red-portion of $k$ contains no $2$-colored cycle and has finite vertices, or
    \item [\rm{(ii)}]The blue-portion of $k$ contains no $2$-colored cycle and has finite vertices.
\end{enumerate}
Then the assumption \eqref{eqn-main-thm-assump} implies $c_k=0$.

Moreover, every vertex in a red-portion or blue-portion which contains no $2$-colored cycle and has finite vertices satisfies {\rm{(i)}} or {\rm{(ii)}}.
\end{lemma}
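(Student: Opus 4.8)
The plan is to first distil the vanishing hypothesis \eqref{eqn-main-thm-assump} into algebraic relations via the single-segment estimate, and then run a graph-theoretic strong induction on the size of the relevant portion. Since $T>\frac{2\pi}{\min\{\gamma'(v_1),\gamma'(v_2)\}}=\max\bigl\{\frac{2\pi}{\gamma'(v_1)},\frac{2\pi}{\gamma'(v_2)}\bigr\}$, I may apply Proposition~\ref{thm-sec2-main-1}(iv), in the form \eqref{eqn-dk-ck}, to \emph{each} slope. As the left-hand sides are sums of squares and the observations vanish, I obtain three families of relations: (R) for every $j\in\Pi(v_1)$, $d_j(v_1):=\sum_{m\in\Xi_j(v_1)}e^{i(p(m)t_1+mx_1)}c_m=0$; (B) the analogous $d_j(v_2)=0$ for every $j\in\Pi(v_2)$; and (Z) $c_j=0$ whenever $j$ misses a colour, i.e.\ whenever $j\notin\Pi(v_1)$ or $j\notin\Pi(v_2)$. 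In graph language $\Xi_j(v_1)\setminus\{j\}$ is exactly the set of red neighbours of $j$, so (R) is a single linear relation with nonzero coefficients among $c_j$ and the coefficients of the red neighbours of $j$, and symmetrically (B) relates $c_j$ to its blue neighbours.

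By the symmetry $v_1\leftrightarrow v_2$, which interchanges the two colours and the two alternatives, I may assume $k$ satisfies (i): its red-portion $R$ is finite and contains no $2$-colored cycle. I prove $c_k=0$ by strong induction on $|R|$, phrased simultaneously for \emph{every} vertex whose red- or blue-portion is finite and $2$-colored-cycle-free. If $k$ has no red edge then $k\notin\Pi(v_1)$, so (Z) gives $c_k=0$; this settles the base case $|R|=1$. Otherwise let $a_1,\dots,a_r$ (finitely many, by (H1)) be the red neighbours of $k$, so $\Xi_k(v_1)=\{k,a_1,\dots,a_r\}$. By (R) it suffices to show $c_{a_j}=0$ for each $j$, for then the relation collapses to $e^{i(p(k)t_1+kx_1)}c_k=0$ and hence $c_k=0$.

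The heart of the matter is the claim that for a red neighbour $a$ of $k$, its blue-portion $B_a$ satisfies $B_a\subseteq R$ and $k\notin B_a$, whence $|B_a|\le|R|-1$. To see $k\notin B_a$, suppose a simple path $P$ from $k$ to $a$ ended in a blue edge at $a$; appending the red edge $a\,k$ produces a cycle $C$ carrying both a blue and a red edge, hence $2$-colored. Traversing $C$ from any of its vertices (or either endpoint of any of its edges) towards $k$ \emph{through} $a$ exhibits, by the definition of the red-portion, every vertex and edge of $C$ as connected to $k$ in red; thus $C\subseteq R$, contradicting that $R$ is $2$-colored-cycle-free. The same device gives $B_a\subseteq R$: if $w\in B_a$ is reached by a blue-ending path $Q$ from $w$ to $a$, then $k$ cannot lie on $Q$ (else its tail would witness $k\in B_a$), so appending the red edge $a\,k$ to $Q$ yields a simple, red-ending path from $w$ to $k$, i.e.\ $w\in R$; edges are handled identically. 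Any $2$-colored cycle inside $B_a$ would then lie in $R$, so $B_a$ is finite and $2$-colored-cycle-free with $|B_a|<|R|$. Thus $a$ satisfies (ii) with a strictly smaller portion, the induction hypothesis gives $c_a=0$, and $c_k=0$ follows.

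For the ``moreover'' statement I must show every $j\in R$ again satisfies (i) or (ii). Fix a simple red-ending path from $j$ to $k$, let $X$ be the colour of its first edge at $j$, and consider the opposite-colour portion $\overline{X}$ of $j$: the same concatenation-and-shortcut argument shows it is contained in $R$, the point being that the differing colours at $j$ prevent the gluing from cancelling the certifying red edge at $k$, while any $2$-colored cycle so produced again lives in $R$. Hence this portion is finite and $2$-colored-cycle-free and $j$ satisfies (i) or (ii). I expect the main obstacle to be exactly this graph surgery---ensuring that every concatenated walk reduces to a \emph{simple} path that retains its final red edge at $k$, and that all cycles produced genuinely lie inside $R$. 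The asymmetric ``last edge'' convention in the definition of the portions is precisely what makes these reductions valid, and the finiteness coming from (H1) keeps each $\Xi_j$, hence every local neighbourhood, finite throughout.
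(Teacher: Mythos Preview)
Your proof is correct and takes a genuinely different inductive route from the paper. The paper's induction, also on the size $n$ of the red-portion, proceeds by locating a \emph{monochromatic} vertex inside $R$ (one whose incident edges all share a colour), deducing that its coefficient vanishes, deleting it, and reducing to $n-1$; the existence of such a vertex is argued by building an alternating walk which, by finiteness, must close up into a $2$-colored cycle, contradicting the hypothesis on $R$. Your induction instead reduces $k$ directly to its red neighbours $a$: the key graph-theoretic lemma you prove is that $B_a\subseteq R\setminus\{k\}$, so each $a$ satisfies (ii) with strictly smaller portion, and the induction alternates between (i) and (ii). What your approach buys is a cleaner, purely recursive structure that avoids the leaf-finding argument and makes the interplay between the two colours explicit; what the paper's approach buys is a more elementary peeling procedure that stays within a single colour-class and does not require tracking two portions simultaneously. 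For the ``moreover'' statement both arguments are at a comparable level of sketchiness: the paper simply asserts that a bad $2$-colored cycle or infinite set in a portion of $k'$ must lie in $R$, while you indicate the concatenation-and-shortcut mechanism; in either case the crucial point---that concatenating with the red-ending path to $k$ either yields a red-ending simple path or produces a $2$-colored cycle inside $R$---is correct, though your write-up would benefit from making the shortcut step explicit when $S$ and $P$ share vertices other than $j$.
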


\begin{proof}
By the symmetric property, it suffices to show that, under the condition (i),  the assumption \eqref{eqn-main-thm-assump} implies $c_k=0$. We prove it by induction. Let $n$ be the number of all vertices of the red-portion who satisfies (i). For $n=1$, then there exists only one vertex $k$, this implies that $k$ only has blue-colored edges incident with it, then $k\notin \Pi(v_1)$ and $c_k=0$ by Proposition~\ref{thm-sec2-main-1} (iv). For $n=2$, there exists only one neighborhood $k_1$ of $k$ in the red-portion. If $k_1$ has another neighborhood $k_2\neq k$, then $k_2$ must be connected to $k$ along a path through a red-colored edge incident with $k$, which contradicts to $n=2$. Hence $k_1$ has only one neighborhood, this implies $c_{k_1}=0$ by Proposition~\ref{thm-sec2-main-1} (iv). Still by Proposition~\ref{thm-sec2-main-1} (iv), we have 
\begin{equation*}
    e^{i({p(k)}t_1+kx_1)}c_k+e^{i{p(k_1)}t_1+k_1x_1)}c_{k_1}=0.
\end{equation*}
This implies $c_k=0$. Now we assume that the lemma under condition (i) is true for the number of vertices of the red-portion of $k$ smaller than $n$, and consider the case of $n$ vertices. If the red-portion has a vertex with only one-colored edges, then we can remove it and reduce to the case of $n-1$ vertices. Indeed, the red-portion who satisfies condition (i) always has such vertices. Otherwise, starting from a neighborhood $k_1$ of $k$ in the red-portion, and pass to a neighborhood $k_2$ of $k_1$ with the blue-colored edge $e(k_1,k_2)$. Since $k_2$ has both colored edges, we choose a neighborhood $k_3$ of $k_2$ with the red-colored edge $e(k_2,k_3)$. Repeat this approach and ensure $k_i\notin \bigl\lbrace k_1,\cdots,k_{i-1}\bigr\rbrace$ for each step $i$. Since the number of vertices is finite, it will terminate when $k_i=k_{j}$ for some $j<i$. This implies that the red-portion has a two-colored cycle, which contradicts to the condition (i).

Now we prove the second statement. Without loss of generality, let $k'$ be any vertex in the red-portion of $k$ who contains no $2$-colored cycle and has finite vertices, suppose that it does not satisfy condition (i) and (ii). Then both the red-portion and blue-portion of $k'$ contain a $2$-colored cycle or infinite {path}. However, either $2$-colored cycle or infinite vertices is contained in the red-portion of $k$, which leads to a contradiction.
\end{proof}

Now we remove all vertices, in the graph $G(v_1,v_2)$, satisfying condition (i) or (ii) in Lemma~\ref{thm-reduced-lemma}, and obtain a new graph. We denote it by $\Tilde{G}(v_1,v_2)$, called the \textit{reduced graph} of $G(v_1,v_2)$, see Figure~\ref{fig:5}.
\begin{figure}[htbp]
    \centering
     \begin{tikzpicture}
        \SetVertexMath
        \GraphInit[vstyle=Simple]
        \tikzset{VertexStyle/.append style = {
minimum size = 3pt, inner sep =0pt}}
        \Vertex[x=1,y=1]{0}
        \Vertex[x=0,y=0]{1}
        \Vertex[x=2,y=0]{2}
        \Vertex[x=1,y=-1]{3}
        \Vertex[x=1,y=-2.2]{4}
        \Vertex[x=2.5,y=0.6]{5}
        \Vertex[x=4.1,y=0.6]{6}
        \Vertex[x=4.1,y=-1]{7}
        \Vertex[x=2.5,y=-1]{8}
        \Vertex[x=5,y=-1]{9}
        \Vertex[x=6.6,y=-1]{10}
        \Vertex[x=5.8,y=0.2]{11}
        \draw[Red,thick,opacity=0.6] (0)--(1);
        \draw[Blue,thick,opacity=0.6] (1)--(3);
        \draw[Red,thick,opacity=0.6] (3)--(2);
        \draw[Blue,thick,opacity=0.6] (2)--(0);
        \draw[Red,thick,opacity=0.6] (3)--(4);
        \draw[Red,thick,opacity=0.6] (2)--(4);
        \draw[Red,thick,opacity=0.6] (5)--(6);
        \draw[Blue,thick,opacity=0.6] (6)--(7);
        \draw[Red,thick,opacity=0.6] (7)--(8);
        \draw[Blue,thick,opacity=0.6] (8)--(5);
        \draw[Red,thick,opacity=0.6] (9)--(10)--(11)--(9);
        \draw (7,0) node[right] {\Large $\Longrightarrow$};
        \Vertex[x=10,y=1]{0r}
        \Vertex[x=9,y=0]{1r}
        \Vertex[x=11,y=0]{2r}
        \Vertex[x=10,y=-1]{3r}
        \draw[Red,thick,opacity=0.6] (0r)--(1r);
        \draw[Blue,thick,opacity=0.6] (1r)--(3r);
        \draw[Red,thick,opacity=0.6] (3r)--(2r);
        \draw[Blue,thick,opacity=0.6] (0r)--(2r);
        \Vertex[x=11.5,y=0.6]{5r}
        \Vertex[x=13.1,y=0.6]{6r}
        \Vertex[x=13.1,y=-1]{7r}
        \Vertex[x=11.5,y=-1]{8r}
        \draw[Red] (5r)--(6r);
        \draw[Blue,thick,opacity=0.6] (6r)--(7r);
        \draw[Red,thick,opacity=0.6] (7r)--(8r);
        \draw[Blue,thick,opacity=0.6] (8r)--(5r);
        \draw (2,-1.7) node[right] {$G(v_1,v_2)$};
        \draw (10,-1.7) node[right] {$\Tilde{G}(v_1,v_2)$};
     \end{tikzpicture}
    \caption{Reduction of the graph $G(v_1,v_2)$}
    \label{fig:5}
\end{figure}
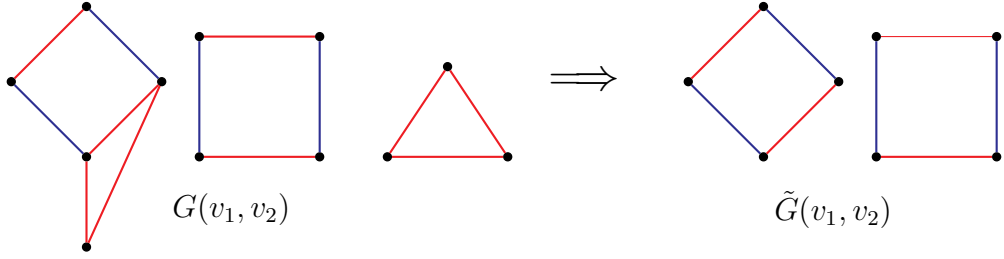

In order to use it, we need some lemmas to explain the equivalence of $G(v_1,v_2)$ and $\Tilde{G}(v_1,v_2)$ in some sense.

\begin{lemma}\label{thm-lemma-1}
Every vertex of the reduced graph $\Tilde{G}(v_1,v_2)$ has both red-colored and blue-colored edges.
\end{lemma}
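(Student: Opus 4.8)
The plan is to prove this (purely graph-theoretic) statement by combining the definition of the reduced graph with the structural lemmas already in hand. By the red/blue symmetry of the construction of $G(v_1,v_2)$, it suffices to prove that every vertex $k$ of $\tilde G(v_1,v_2)$ has a red edge whose other endpoint also survives the reduction; the blue case is identical.

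First I would record the elementary fact that a surviving vertex carries an edge of each colour already in the full graph. Indeed, if $k$ had no red edge at all, then no vertex other than $k$ could be connected to $k$ in red, so the red-portion of $k$ would be the single vertex $\{k\}$, which is finite and contains no $2$-colored cycle; by Lemma~\ref{thm-reduced-lemma}(i) this would force $k$ to be removed, contradicting $k\in\tilde G(v_1,v_2)$. Hence $k$ has at least one red edge, and symmetrically at least one blue edge, in $G(v_1,v_2)$.

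The core step is to upgrade this to a \emph{surviving} red neighbour. Since $k$ was not removed, Lemma~\ref{thm-reduced-lemma}(i) fails for $k$, so its red-portion $R_k$ either contains a $2$-colored cycle or is infinite. I would decompose $R_k\setminus\{k\}$ according to the red edge at $k$ through which each vertex is reached, writing $R_k=\{k\}\cup\bigcup_{u}H_u$ over the finitely many (by (H1)) red neighbours $u$ of $k$; the relevant $2$-colored cycle, or the infinite part, must then lie in a single branch $H_{u_0}$. Passing to an alternating certificate inside that branch, an alternating cycle via Lemma~\ref{thm-path-equiv} in the cyclic case, and a two-way infinite alternating path obtained from (H1) (local finiteness, a K\"onig-type argument) together with (H2) (which, outside a finite set, limits each colour to one edge per vertex and so forces the infinite branch to be an alternating path), I would then argue that this certificate $\mathcal{C}$ can be taken incident to $k$ through the edge $(k,u_0)$. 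Finally, for any vertex $v$ lying on an alternating cycle or on a two-way infinite alternating path, traversing $\mathcal{C}$ away from $v$ first along its red edge and then along its blue edge exhibits $\mathcal{C}$ inside both the red- and the blue-portion of $v$; hence neither (i) nor (ii) of Lemma~\ref{thm-reduced-lemma} can hold for $v$, so every vertex of $\mathcal{C}$ survives. In particular $u_0\in\tilde G(v_1,v_2)$ and the red edge $(k,u_0)$ lies in $\tilde G(v_1,v_2)$, as required.

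The main obstacle is the middle of the core step: guaranteeing that the alternating certificate inside $H_{u_0}$ is genuinely incident to $k$ through the red edge $(k,u_0)$, rather than sitting deep inside $H_{u_0}$ and separated from $k$ by vertices that are themselves removed. I expect to handle this by building the certificate greedily as a maximal alternating walk issuing from $k$ along its red edge: local finiteness (H1) together with the assumption that this branch is not finite-and-acyclic should prevent the walk from dying at a single-coloured vertex, so it must either close up into an alternating cycle through $(k,u_0)$ or run off to infinity as an alternating ray, which (H2) then promotes to the two-way object required by the helper above. Carrying out this extension argument carefully, and checking that the shortcut edges produced by Lemma~\ref{thm-path-equiv} remain inside $R_k$, is the delicate bookkeeping the proof must complete.
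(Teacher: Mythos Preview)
Your plan is considerably more elaborate than what the paper does, and the hardest part of your sketch---getting an alternating \emph{two-way} certificate that is genuinely incident to $k$ through the chosen red edge---is left open and, as you phrase it, does not obviously go through. In particular, a greedy alternating walk from $k$ into the ``big'' branch $H_{u_0}$ can die at a monochromatic vertex even though a $2$-colored cycle or an infinite piece sits elsewhere in $H_{u_0}$; backtracking turns this into a tree search you have not carried out. And even if the walk produces a one-way alternating ray $k,u_0,u_1,\ldots$, your appeal to (H2) to ``promote'' it to a two-way object is not justified: (H2) only limits the degree in each colour for large vertices and says nothing about extending a ray backwards through $k$. Without a two-sided certificate, your survival argument for $u_0$ breaks down, because the finite side of the ray lands in the wrong-coloured portion of $u_0$.

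The paper avoids all of this by never constructing a certificate. It argues by contradiction and uses only the \emph{containment of portions} together with the second clause of Lemma~\ref{thm-reduced-lemma}, which you do not invoke. Assume $k\in\tilde G(v_1,v_2)$ has, say, no blue edge in $\tilde G$. As you observed, $k$ must have a blue neighbour $k_1$ in $G$, and by hypothesis $k_1$ is removed. Appending the blue edge $(k,k_1)$ to any path realising membership in the red-portion of $k$ shows that the red-portion of $k$ sits inside the blue-portion of $k_1$; since the former is big (as $k$ fails (i)), so is the latter, and hence $k_1$ fails (ii). Being removed, $k_1$ must then satisfy (i). Now run the same containment with the roles of $k$ and $k_1$ reversed to place the relevant portion of $k$ inside the small red-portion of $k_1$, and invoke the second statement of Lemma~\ref{thm-reduced-lemma} to conclude that $k$ itself is removable---a contradiction. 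The whole proof is a few lines of portion-chasing; no cycles, rays, or K\"onig-type arguments are needed.
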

\begin{proof}
We prove it by contradiction. Assume that the vertex $k$ in $\Tilde{G}(v_1,v_2)$ has only red-colored edges. Then it must have both colored edges in $G(v_1,v_2)$ due to the construction of $\Tilde{G}(v_1,v_2)$ and Lemma~\ref{thm-reduced-lemma}. Since $k\in \Tilde{G}(v_1,v_2)$, it does not satisfy condition (i) and (ii) in Lemma~\ref{thm-reduced-lemma}.
Let $k_1\in G(v_1,v_2)$ be any neighborhood of $k$ with a blue-colored edge. Since $k$ does not satisfies (i), and the red-portion of $k$ is a subgraph of the blue-portion of $k_1$, according to the second statement of Lemma~\ref{thm-reduced-lemma}, we see that $k_1$ does not satisfy (ii). Since $k_1\notin \Tilde{G}(v_1,v_2)$ (this follows from our hypothesis on $k$ and the choices of $k_1$), it must satisfy (i). However, the red-portion of $k_1$ is a subgraph of the blue-portion of $k$ and this implies that $k$ satisfies (ii), which leads to a contradiction.
\end{proof}

By {Lemma~\ref{thm-path-equiv} and Lemma~\ref{thm-lemma-1}} , we have the following lemma directly:
\begin{lemma}\label{thm-lemma-2}
 The reduced graph $\Tilde{G}(v_1,v_2)$ contains a two colored two-way infinite path $($two-colored cycle, respectively$)$ if and only if  the graph $G(v_1,v_2)$ contains a two colored two-way  infinite path $($two-colored cycle, respectively$)$.
\end{lemma}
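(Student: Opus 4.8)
The plan is to prove the two implications separately, observing that one direction is essentially free and the other carries all the content. Since $\Tilde{G}(v_1,v_2)$ is by construction a subgraph of $G(v_1,v_2)$ (it is obtained only by deleting certain vertices together with the edges incident to them), any two-colored two-way infinite path, resp.\ two-colored cycle, that lives inside $\Tilde{G}(v_1,v_2)$ is automatically one inside $G(v_1,v_2)$. So the ``only if'' direction requires no work.

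For the ``if'' direction, suppose $G(v_1,v_2)$ contains a two-colored two-way infinite path, resp.\ a two-colored cycle. First I would apply Lemma~\ref{thm-path-equiv} to replace it by an \emph{alternative} one --- call it $\mathcal{L}$ (resp.\ $C$) --- in which consecutive edges always switch color. The purpose of passing to the alternative version is that then every vertex of the structure is incident to exactly one red and one blue edge \emph{of the structure}, which is precisely the property needed to keep all its vertices out of the deletion process.

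The core step is then to verify that no vertex of $\mathcal{L}$ (resp.\ $C$) satisfies condition (i) or (ii) of Lemma~\ref{thm-reduced-lemma}, so that the whole structure survives into $\Tilde{G}(v_1,v_2)$. Fix a vertex $k$ on the structure; by alternation it has both a red and a blue incident edge of the structure. Following the structure away from $k$ starting along its red edge produces paths that reach $k$ with a red final edge, so all the vertices of the structure reachable this way, together with the connecting edges, lie in the red-portion of $k$; symmetrically the whole structure lies in the blue-portion of $k$. In the cycle case both portions then contain the two-colored cycle $C$, so neither (i) nor (ii) can hold; in the two-way infinite path case both portions contain an infinite ray, hence are infinite, so again neither (i) nor (ii) holds. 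Thus $k$ is never removed, the alternative structure embeds into $\Tilde{G}(v_1,v_2)$, and $\Tilde{G}(v_1,v_2)$ contains a two-colored cycle (resp.\ two-colored two-way infinite path). Lemma~\ref{thm-lemma-1}, which guarantees that every surviving vertex genuinely carries both colors, confirms that the output really is a two-colored object of $\Tilde{G}(v_1,v_2)$.

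The main obstacle --- and the reason Lemma~\ref{thm-path-equiv} must be invoked first --- is that for a merely two-colored, non-alternative structure the survival argument would fail: a vertex sitting in the interior of a monochromatic stretch of the path or cycle may have no incident edge of the opposite color, so its opposite-color portion could be just the trivial one-vertex graph, which is finite and contains no cycle; such a vertex would satisfy (i) or (ii) and be deleted, potentially destroying the structure. Alternation removes exactly this danger, after which the survival check is routine.
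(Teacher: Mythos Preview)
Your proof is correct and follows essentially the same route the paper indicates: the paper's entire proof is the single line ``By Lemma~\ref{thm-path-equiv} and Lemma~\ref{thm-lemma-1}, we have the following lemma directly,'' and your argument is a faithful fleshing-out of that sketch --- pass to an alternative structure via Lemma~\ref{thm-path-equiv}, then verify that every vertex of it violates both conditions (i) and (ii) of Lemma~\ref{thm-reduced-lemma}, so the structure survives intact into $\Tilde{G}(v_1,v_2)$. One minor wording slip: in the two-way infinite path case the blue-portion of $k$ need only contain the blue ray, not literally ``the whole structure''; but since you then correctly argue from the infinitude of each ray, this does not affect the argument.
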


Now we are ready to prove Theorem~\ref{thm:main2}.

\medskip

\noindent{\itshape \bf Proof of Theorem \rm{\ref{thm:main2}} (i)}.

{\bf ``If part''.}
Suppose that $G(v_1,v_2)$ has no two-colored cycles. By Lemma~\ref{thm-reduced-lemma} and the assumption \eqref{eqn-main-thm-assump}, we deduce $c_k=0$ for each vertex $k$ in $G(v_1,v_2)$ but not in $\Tilde{G}(v_1,v_2)$. Hence it remains to consider vertices in $\Tilde{G}(v_1,v_2)$. We consider the first case that $G(v_1,v_2)$ has neither two-colored cycle nor one-way infinite path. By Lemma~\ref{thm-lemma-2}, the graph $\Tilde{G}(v_1,v_2)$ contains neither two-way infinite path nor two-colored cycle. This combines with Lemma~\ref{thm-lemma-1} implies that the graph  $\Tilde{G}(v_1,v_2)$ is empty. Hence $c_k=0$ for all $k\in \Z$, and the desired qualitative observability holds. Now we assume that $\Tilde{G}(v_1,v_2)$ has no two-colored cycle but has a one-way infinite path. We call a one-way infinite path $\mathcal{L}:=\bigl\lbrace k_i,e:i\in\N,e=e(k_i,k_{i+1}),\forall i\in \N \bigr\rbrace$ is primitive if the number of neighborhoods of each $k_i,i\in\N$ in $\Tilde{G}(v_1,v_2)$ is equal to $2$. By Proposition~\ref{thm-sec2-main-1} (iv)
we obtain
\begin{equation}
    e^{i\omega_i}c_{k_i}+e^{i\omega_{i+1}}c_{k_{i+1}}=0\label{eqn-fix}
\end{equation}
where $\omega_i,\omega_{i+1}\in \R,i\in\N$. Clearly, \eqref{eqn-fix} implies that $|c_{k_i}|=|c_{k_{i+1}}|$. Recall that $\|u_0\|_{L^2(\T)}^2=2\pi\sum_{k\in\Z}|c_k|^2<\infty$, we conclude that $c_{k_i}=0$ for all vertices $k_{i}$ in a primitive one-way infinite path.
Hence we can remove all vertices belonging to a primitive one-way infinite paths, and get a new graph $H(v_1,v_2)$. The new graph $H(v_1,v_2)$ has no one-way infinite path. Indeed, if it has a one-way infinite path $\mathcal{L}'$, by the hypothesis (H2), then we can find a primitive one-way infinite path contained in $\mathcal{L}'$, which is impossible. Hence the graph $H(v_1,v_2)$ has only finitely many vertices and no $2$-colored cycle, then using Lemma~\ref{thm-reduced-lemma} we obtain $c_k=0$ for all $k$ in $H(v_1,v_2)$.

{\bf ``Only if part''.} To prove the converse, we argue by contradiction. Suppose that $G(v_1,v_2)$ has a two-colored cycle, we shall construct a non-trivial solution satisfying \eqref{eqn-main-thm-assump}. In fact, by Lemma~\ref{thm-path-equiv} there exists an alternative cycle whose vertices are arranged in a cyclic sequence $\bigl\lbrace k_j \bigr\rbrace_{j=1}^{2m}$ with red-colored edges $e(k_{2j-1},k_{2j})$ and blue-colored edges $e(k_{2j},k_{2j+1})$, $j=1,2,\cdots,m$ and identify $k_{2m+1}$ with $k_{1}$. This is possible since the number of vertices in a two-colored alternative cycle is even.

 We first note that the system of equations
\begin{equation}\label{equ-ucp-diff-start-1}
    \begin{cases}
        x=x_1-v_1(t-t_1) &  \\
        x=x_2-v_2(t-t_2) &
    \end{cases}
\end{equation}
has a unique solution $(t,x)=(t_0,x_0)$ with
\begin{equation}
    \begin{cases}
        t_0= \frac{(x_1+v_1t_1)-(x_2+v_2t_2)}{v_1-v_2} &  \\
        x_0= \frac{v_1(x_2+v_2t_2)-v_2(x_1+v_1t_1)}{v_1-v_2}
    \end{cases}\label{equ-ucp-diff-start-2}.
\end{equation}
We set
\begin{equation}
    c_{k_j}=\begin{cases}
        e^{-i(p(k_j)t_0+k_jx_0)}, & 2\nmid j, \\
        -e^{-i(p(k_j)t_0+k_jx_0)}, & 2\mid  j,\label{eqn-equal-start}
    \end{cases}
\end{equation}
for $j=1,2,\cdots, 2m$ and $c_k=0$ for $k\notin \bigl\lbrace k_1,k_2,\cdots,k_{2m} \bigr\rbrace$.
Consider the solution $u(t,x)$ of \eqref{eqn-abstract} with the initial data $u_0=\sum\limits_{j=1}^{2m}c_{k_j}e^{i k_jx}$. Then we have
\begin{equation}\label{equ-ucp-diff-start-3}
    \begin{array}{ll}
       &u(t_1+t,x_1-v_1t)\\
   &=\sum_{j=1}^{2m}(-1)^{j-1}e^{-i\left(p(k_j)t_0+k_jx_0\right)}e^{i\left((p(k_j)t_1+k_j x_1\right)}e^{i\lambda_{k_j}(v_1)t}  \\
   &=\sum_{j=1}^{m}\bigl( e^{i\left(p(k_{2j-1})t_1+k_{2j-1}x_1-p(k_{2j-1})t_0-k_{2j-1}x_0)\right)}\\
    & \qquad \quad - e^{i\left(p(k_{2j})t_1+k_{2j}x_1-p(k_{2j})t_0-k_{2j}x_0)\right)} \bigr)e^{i\lambda _{k_{2j-1}}(v_1)t},\quad \forall t\in\R,
    \end{array}
\end{equation}
where in the last identity we used
\begin{equation}\label{equ-ucp-diff-start-4}
    \lambda_{k_{2j-1}}(v_1)=\lambda_{k_{2j}}(v_1),\quad j=1,2,\cdots,
\end{equation}
which follows from the fact that the vertices $k_{2j-1}$ and $k_{2j}$ are adjacent through red color edges.

Also, since $(t_0,x_0)$ given by \eqref{equ-ucp-diff-start-2} solves \eqref{equ-ucp-diff-start-1}, we have
$$
  x_0-x_1 =-v_1(t_0-t_1),  \quad x_0-x_2=-v_2(t_0-t_2).
$$
Then we deduce from \eqref{equ-ucp-diff-start-3} that
\begin{align}\label{equ-linevanish-1}
   &u(t_1+t,x_1-v_1t) \nonumber\\
    &=\sum_{j=1}^{m}\bigl( e^{i\left(p(k_{2j-1})-k_{2j-1}v_1\right)(t_1-t_0)}
         - e^{i\left(p(k_{2j})-k_{2j}v_1\right)(t_1-t_0)} \bigr)e^{i\lambda _{k_{2j-1}}(v_1)t}\nonumber \\
    & =0, \quad \forall t\in\R.
    \end{align}
Here we used that the coefficients of  $e^{i\lambda _{k_{2j-1}}(v_1)t}$ are zero, which follows from \eqref{equ-ucp-diff-start-4} and the definition of $\lambda_k(v)$.

Similarly, since the vertex $k_{2m+1}$ is identified with $k_1$, we deduce  that
\begin{equation*}
    u(t_2+t,x_2-v_2t)=0,\quad \forall t\in\R.
\end{equation*}
Hence there exists a nontrivial solution satisfies \eqref{eqn-main-thm-assump}.

\medskip

\noindent{\itshape \bf Proof of Theorem \rm{\ref{thm:main2}} (ii)}.

{\bf ``If part''.}
  We assume that the graph $G(v_1,v_2)$ has no two-colored cycle and $g(v_1,v_2)<\infty$. We split the discussion into two cases.

 {\bf Case (1):}
the graph $G(v_1,v_2)$ has only one connected component. Then the number of elements in $\Pi(v_1)\cup\Pi(v_2)$ satisfies
\begin{align}\label{equ-proof-thm1.2-ii-1}
|\Pi(v_1)\cup\Pi(v_2)|\leq g(v_1,v_2)<\infty.
\end{align}
Define
\begin{equation}\label{equ-proof-thm1.2-ii-2}
    d_{i,k}:=\sum_{m\in\Xi_{k}(v_i)} e^{i(p(m)t_0+mx_0)}c_m,\quad \forall k\in\Z,i=1,2.
\end{equation}
By Proposition~\ref{thm-sec2-main-1} (iv) we have for $i=1,2$
\begin{equation}\label{equ-proof-thm1.2-ii-3}
    \sum_{k\in \Pi(v_i)}\frac{1}{n_k(v_i)}|d_{i,k}|^2+\sum_{k\in\Z\backslash \Pi(v_i)}|c_k|^2\lesssim \int_0^T|u(t_i+t,x_i-v_it)|^2\d t.
\end{equation}
By the assumption (H1), both $n_k(v_1)$ and $n_k(v_2)$ are finite for every $k\in \Z$. Recall the fact \eqref{equ-proof-thm1.2-ii-1}, we infer
\begin{equation}\label{equ-proof-thm1.2-ii-4}
\max\bigl\lbrace n_k(v_i): k\in \Pi(v_1)\cup\Pi(v_2),i=1,2\bigr\rbrace<\infty.
\end{equation}
Summing \eqref{equ-proof-thm1.2-ii-3} for $i=1,2$ and using \eqref{equ-proof-thm1.2-ii-4}, we find
\begin{equation*}
    \sum_{k\in\Pi(v_1)\cup \Pi(v_2)}\bigl(|d_{1,k}|^2+|d_{2,k}|^2\bigr)+\sum_{k\in \Z \backslash\bigl(\Pi(v_1)\cup\Pi(v_2)\bigr)}|c_k|^2\lesssim \int_0^T\sum_{i=1}^2 |u(t_i+t,x_i-v_it)|^2\d t.
\end{equation*}
Hence the quantitative observability holds if one can show that
\begin{equation}
    \sum_{k\in \Pi(v_1)\cup\Pi(v_2)}|c_k|^2\lesssim \sum_{k\in\Pi(v_1)\cup\Pi(v_2)}\bigl(|d_{1,k}|^2+|d_{2,k}|^2\bigr).\label{eqn-sufficient-inequality}
\end{equation}
We prove \eqref{eqn-sufficient-inequality} by contradiction. Assume that there exist sequences $\bigl\lbrace c_k^{(j)}\bigr\rbrace_{j\in \N}$  for all $k\in \Pi(v_1)\cup\Pi(v_2)$ such that
\begin{equation}
    \sum\limits_{k\in\Pi(v_1)\cup\Pi(v_2)}\bigl|c_k^{(j)}\bigr|^2=1,\quad \forall j\in\N,\label{eqn-the-wlg}
\end{equation}
and when $j\to \infty $
\begin{equation}
    \sum\limits_{k\in\Pi(v_1)\cup\Pi(v_2)}\bigl(\bigl|d^{(j)}_{1,k}\bigr|^2+\bigl|d_{2,k}^{(j)}\bigr|^2\bigr)
    \rightarrow 0,\label{eqn-contrad-limits}
\end{equation}
where $d_{i,k}^{(j)}=\sum\limits_{m\in \Xi_k(v_i)}e^{i(p(m)t_0+mx_0)}c_m^{(j)}$ for all $j\in \N,k\in\Pi(v_1)\cup\Pi(v_2), i=1,2$.

Since $|\Pi(v_1)\cup\Pi(v_2)|$ is finite, there exists a subsequence of $c_k^{(j)}$, also denoted by $c_k^{(j)}$,  such that
\begin{equation}\label{equ-proof-thm1.2-ii-5}
c_k^j \to c_k^0, \quad \forall k\in \Pi(v_1)\cup\Pi(v_2).
\end{equation}
On the other hand, by \eqref{eqn-contrad-limits} we obtain when $j\rightarrow \infty$
$$
    d^{(j)}_{i,k}\rightarrow 0,\quad\forall k\in\Pi(v_1)\cup\Pi(v_2), i=1,2.
$$
Thus, by the qualitative observability in Theorem~\ref{thm:main2} (i), the limit in \eqref{equ-proof-thm1.2-ii-5} satisfies $c_k^{(0)}=0$ for all $k\in\Pi(v_1)\cup\Pi(v_2)$. But this contradicts to \eqref{eqn-the-wlg}.

  {\bf Case (2):} the graph $G(v_1,v_2)$ has more than $1$ connected components. Assume the graph has $m$ components and denote all components of the graph by $H_1,H_2,\cdots,H_m$. According to the proof of Case (1), for each connected component $H_i\subset G(v_1,v_2)$ with $|H_i|\le {M:= g(v_1,v_2)<\infty}, i=1,2,\cdots,m$, \eqref{eqn-sufficient-inequality} can be written as
\begin{equation}\label{eqn-for-each}
    \sum_{k\in H_i\bigcap (\Pi(v_1)\cup\Pi(v_2))}|c_k|^2 \le C(M,T)\sum_{k\in H_i\bigcap (\Pi(v_1)\cup\Pi(v_2))}\bigl(|d_{1,k}|^2+|d_{2,k}|^2\bigr),
\end{equation}
where $C(M,T)$ depends only on $M$ and $T$ { and uniform for the number of connected components} (Indeed, a connected component with at most $M$ vertices has only finitely many possible forms, hence we may choose the largest constant as $C(M,T)$). We sum \eqref{eqn-for-each} for $i=1,2,\cdots,m$ and get
\begin{equation}
    \sum_{k\in \Pi(v_1)\cup\Pi(v_2)}|c_k|^2\le C(M,T)\sum_{k\in\Pi(v_1)\cup\Pi(v_2)}\bigl( |d_{1,k}|^2+|d_{2,k}|^2 \bigr).
\end{equation}
If the graph $G(v_1,v_2)$ has infinitely many connected components, we can use the conclusion of finitely many components and take the limit.

{\bf ``Only if part''.}
Assume the quantitative observability \eqref{eqn-quantitative} holds. Then by the proof of Theorem~\ref{thm:main2} (i), the graph $G(v_1,v_2)$ has no two-colored cycle. If the graph $G(v_1,v_2)$ is finite, we certainly have $g(v_1,v_2)<\infty$. So we assume that the graph $G(v_1,v_2)$ is not finite now. To show $g(v_1,v_2)<\infty$, we argue by contradiction. To this end, we assume  $g(v_1,v_2)=\infty$, and split the discussion into two cases.

{\bf Case (1):} the graph $G(v_1,v_2)$ has a one-way infinite path. By Lemma~\ref{thm-lemma-1} there exists an alternative one-way infinite path whose vertices are arranged in a linear sequence $\bigl\lbrace k_i\bigr\rbrace \subset \N$ where the edge $e(k_{2i-1},k_{2i})$ is red-colored and $e(k_{2i},k_{2i+1})$ is blue-colored for all $i\in\N$. For every $n\in \N$, we set
    \begin{equation}
        u_{0,n}:=\sum_{j=1}^{2n}c_{k_j}e^{ik_j x}, \quad c_{k_j}=(-1)^je^{-i(p(k_j)t_0+k_jx_0)},
    \end{equation}
    where $(t_0,x_0)$ is defined by \eqref{equ-ucp-diff-start-2}. Then
    \begin{equation}\label{eqn-war-1}
        \int_\T|u_{0,n}(x)|^2\d x=2\pi\sum_{i=1}^{2n}| c_{k_i}|^2= {4\pi n.}
    \end{equation}
However, it follows the same way as \eqref{equ-linevanish-1}  that the corresponding solution $u_n$ satisfies
    \begin{equation}
        \int_0^T\sum_{i=1}^2|u_{n}(t_i+t,x_i-v_it)|^2\d t= \int_0^T |u_{n}(t_2+t,x_2-v_2t)|^2\d t\leq 4T,\label{eqn-war-2}
    \end{equation}
    where the only difference here is that the initial and terminal terms do not cancel out.
    Then by \eqref{eqn-war-1} and \eqref{eqn-war-2} we obtain
    \begin{equation}
       \frac{\displaystyle\int_0^T\sum\limits_{i=1}^2|u(t_i+t,x_i-v_it)|^2\d t}{\displaystyle\int_\T |u_{0,n}(x)|^2\d x}\to 0
    \end{equation}
    when $n\to \infty$.
    This contradicts to \eqref{eqn-quantitative}.

{\bf Case (2):} the graph $G(v_1,v_2)$ has no one-way infinite path and $g(v_1,v_2)=\infty$. Then we must have a sequence of finite path $\bigl\lbrace \mathcal{L}_n\bigr\rbrace_{n\in\N}$ such that for each $n\in\N$, $\mathcal{L}_n$ is equipped with a sequence of vertices $\lbrace k_{n,1},k_{n,2},\cdots,k_{n,m_n}\rbrace$ and
\begin{equation}\label{equ-sequence-mn-1}
    m_1<m_2<\cdots<m_n<\cdots.
\end{equation}
For every $n$, we set
\begin{equation*}
    u_{0,n}:=\sum_{j=1}^{m_n} c_{k_{n,j}}, \quad
    {c_{k_{n,j}}=(-1)^i e^{-i(p(k_{n,j})t_0+k_{n,j}x_0)},} 
\end{equation*}
 where $(t_0,x_0)$ is defined by \eqref{equ-ucp-diff-start-2}. Similar to Case (1), we have
\begin{equation}\label{equ-sequence-mn-2}
    \int_\T |u_{0,n}|^2\d x=2\pi m_n,
\end{equation}
and
\begin{equation}\label{equ-sequence-mn-3}
    \int_0^T\sum_{i=1}^2|u_{i}(t_i+t,x_i-v_it)|^2\d t\leq 4T.
\end{equation}
From \eqref{equ-sequence-mn-1}, $m_n\to \infty$ as $n\to \infty$. Letting $n\to \infty$, the bounds \eqref{equ-sequence-mn-2}-\eqref{equ-sequence-mn-3} lead a contradiction to \eqref{eqn-quantitative}.

So we conclude that $g(v_1,v_2)<\infty$, this completes the proof.

\section{Proof of Theorem~\ref{thm:main3}}
\label{sec:4}
Let $p$ be a real polynomial with degree $\mathrm{deg} \, p\geq 3$.
Then it is easy to see that the assumptions (H1) and (H2) hold. By Theorem  \ref{thm:main2}, Theorem~\ref{thm:main3} follows if one can show that
\begin{equation*}
    g(v_1,v_2)<\infty, \quad v_1\neq v_2\in \R
\end{equation*}
and
\begin{equation*}
    \gamma'(v)=\infty, \quad v\in \R.
\end{equation*}

We split the proof into two cases.

{\bf Case (1):} $\mathrm{deg}\, p$ is odd. Assume that
$$
p(x)=a_{2n+1}x^{2n+1}+\cdots+a_1x+a_0, \quad a_{2n+1}\neq 0, n\geq 1.
$$
For every $v\in \R$,  the relation $\lambda_k(v)=\lambda_m(v)$ is equivalent to
\begin{equation}\label{eqn-odd-1}
    p(k)-p(m)-v(k-m)=0.
\end{equation}
Since the roots of $x^{2n+1}=1$ are $e^{i\frac{2\pi j}{2n+1}},j=1,\cdots,2n+1$, we have the factorization
\begin{equation}\label{eqn-factorization}
k^{2n+1}-m^{2n+1}=(k-m)\prod_{j=1}^{2n}\bigl(k-m e^{i\frac{2\pi j}{2n+1}}\bigr).
\end{equation}
Then we deduce from \eqref{eqn-odd-1} that if $k\neq m$, then
\begin{equation}\label{eqn-odd-3}
    a_{2n+1}\prod_{j=1}^{2n}\bigl(k-m e^{i\frac{2\pi j}{2n+1}}\bigr)+a_{2n}\prod_{j=1}^{2n-1}\bigl(k-m e^{i\frac{2\pi j}{2n}}\bigr)+\cdots+a_1=v.
\end{equation}
Recall that $a_{2n+1}\neq 0$, the leading terms in \eqref{eqn-odd-3} are
\begin{align}\label{eqn-odd-4}
&\prod_{j=1}^{2n}\bigl(k-m e^{i\frac{2\pi j}{2n+1}}\bigr)= \prod_{j=1}^{n}\bigl|k-m e^{i\frac{2\pi j}{2n+1}}\bigr|^2\nonumber\\
 &= \prod_{j=1}^{n}\bigl(k^2+m^2-2km\cos\frac{2\pi j}{2n+1}\bigr)\gtrsim (m^2+k^2)^n.
\end{align}
If { $m,k$ solve \eqref{eqn-odd-3}, then $|m|, |k|$}  are bounded by some constant $ c(v,n,a_{2n+1})>0$. Thus \eqref{eqn-odd-3} has only finitely many possible integer solutions $m,k$. In other words, $|\Pi(v)|$ is finite for all $v\in\R$. Then the graph $G(v_1,v_2)$ is finite for all $v_1,v_2\in\R,v_1\neq v_2$ and therefore $g(v_1,v_2)<\infty$. To show $\gamma'(v)=\infty$ for $v\in\R$, it is sufficient to prove
\begin{equation*}
    \lim\limits_{\substack{m,k\to\infty\\m,k\in\Z, m\neq k}}|\bigl( f(k)-f(m)-v(k-m)\bigr)|=\infty.
\end{equation*}
In fact, if $m\neq k$, then $|k-m|\ge 1$. Thus, by  \eqref{eqn-odd-3} we obtain
\begin{equation*}
    |f(k)-f(m)-v(k-m)|\ge ( k^2+ m^2)^n \to \infty
\end{equation*}
as $m,k\to\infty$.

{\bf Case (2):} $\mathrm{deg}\, p$ is even. Assume that
$$
p(x)=a_{2n}x^{2n}+\cdots+a_1x+a_0, \quad a_{2n}\neq 0, n\geq 2.
$$
For every $v\in\R$,  the relation $\lambda_k(v)=\lambda_m(v),k,m\in\Z,k\neq m$ is equivalent to
\begin{equation}\label{eqn-even-1}
    a_{2n}\sum_{i+j=2n-1} k^{i}m^{j} + a_{2n-1}\sum_{i+j=2n-2} k^{i}m^{j}+\cdots+a_1=v.
\end{equation}
  It is related to the curve
\begin{equation}\label{eqn-even-2}
     a_{2n}\sum_{i+j=2n-1} x^{i}y^{j} + a_{2n-1}\sum_{i+j=2n-2} x^{i}y^{j}+\cdots+a_1=v
\end{equation}
in the plane $(x,y)\in \R^2$. We name it $C(v)$ and analyze its asymptotic behavior. Assume $C(v)$ has an asymptotic line, say $l:y=ax+b$ where the values of $a$ and $b$ are defined as
\begin{equation*}
    a=\lim\limits_{\substack{x \to \infty\\ (x,y)\in C(v)}} \frac{y}{x} \,\text{ and }\,    b:=\lim\limits_{\substack{x\to\infty\\ (x,y)\in C(v)}} y-ax.
\end{equation*}
It is easy to check that the asymptotic line $l$ of $C(v)$ can only be
\begin{equation}
    y=-x-\frac{a_{2n-1}}{a_{2n}}.
\end{equation}
Indeed, the line $l$ is the asymptotic line of $C(v)$ if \eqref{eqn-even-2} has no factor $y+x+\frac{a_{2n-1}}{a_{2n}}$ and $l\subset C(v)$ if \eqref{eqn-even-2} has the factor $y+x+\frac{a_{2n-1}}{a_{2n}}$. Notice that the asymptotic line $l$ is independent of the value of $v$.

 Since any two distinct $C(v_1)$ and $C(v_2)$ do not have common points, there exists at most one $v\in\R$ such that \eqref{eqn-even-2} has the factor $y+x+\frac{a_{2n-1}}{a_{2n}}$.

 We first prove $g(v_1,v_2)<\infty$. It is sufficient to prove that for any distinct $v_1,v_2\in\R$, the length of the alternative path in $G(v_1,v_2)$ cannot be arbitrary large. If not, given any $N\in\N$, we can choose an alternative path large enough such that there exists three adjacent vertices $k,k',k''\in\Z$ in the path and $|k|,|k'|,|k''|\ge N$. Without loss of generality, we assume $k$ and $k'$ are adjacent with the red-colored edge. Then we have $(k,k')\in C(v_1)$ and $(k'',k')\in C(v_2)$. Choose $N$ large enough such that the horizontal distance of points in $C(v_1)$ and $C(v_2)$ are smaller than $1$, then $k$ and $k''$ cannot be integers simultaneously, which leads to a contradiction.

Now we show that $\gamma'(v)=\infty$ for all $v\in\R$.
Fix $v\in\R$. By factorization we have
\begin{align*}
    &|\lambda_k(v)-\lambda_m(v)|=|p(k)-p(m)-v(k-m)|\\
    =&\bigl|(k-m)\Bigl( a_{2n}\sum\limits_{i=0}^{2n-1} k^im^{2n-1-i} +a_{2n-1}\sum\limits_{i=0}^{2n-2}k^im^{2n-1-i}+\cdots+a_1-v\Bigr)\bigr|.
\end{align*}
Thus $\lambda_k(v)\neq \lambda_m(v)$ is equivalent to $k\neq m$ and
\begin{equation}\label{equ-hkm}
    h(k,m):=a_{2n}\sum\limits_{i=0}^{2n-1} k^im^{2n-1-i} +a_{2n-1}\sum\limits_{i=0}^{2n-2}k^im^{2n-1-i}+\cdots+a_1-v\neq 0.
\end{equation}


To show $\gamma'(v)=\infty$, it is sufficient to prove
\begin{equation}\label{equ-gamma-infty-1}
    \lim\limits_{\substack{k,m\to \infty\\
    k\neq m\in\Z,
    h(k,m)\neq 0}}|(k-m)h(k,m)|=\infty.
\end{equation}
We consider the following two cases.


\noindent{\bf Case (a):} $\displaystyle |k+m|>M$, $M>0$ determined later. Factorizing it, we obtain
\begin{align*}
|h(k,m)|&\ge M|a_{2n}|\prod\limits_{j=1}^{n-1}\bigl(k^2-2\cos \frac{j\pi}{n}km+m^2\bigr)\\
     &\quad -|a_{2n-1}|\Bigl| \sum_{j=0}^{2n-2}k^jm^{2n-2-j} \Bigr|+ \text{ lower terms}\\
     &\geq (c_1|a_{2n}|M-c_2\left|a_{2n-1}\right|)(k^2+m^2)^{n-1}\to \infty
\end{align*}
when $k,m\to \infty$, provided that $M>\frac{c_2|a_{2n-1}|}{c_1|a_{2n}|}$.

\noindent{\bf Case (b):} $\displaystyle |k+m|\le M$. Let $\displaystyle m=-k+r,r=0, \pm 1,\pm 2,\cdots,\pm \lfloor M\rfloor$, and substitute it into $h(k,m)$. Then $h(k,m)=h(k,r-k)$ is a polynomial of $k$. If the polynomial has a degree $\mathrm{deg}\,h(k,r-k)\ge 1$, then $h(k,r-k)$ satisfy
\begin{equation}
    |h(k,r-k)|\to \infty
\end{equation}
when $k\to \infty$. If it is a constant, then we can write it as $h(k,r-k)=c-v$. By \eqref{equ-hkm} we obtain  $c-v\neq 0$. Then
\begin{equation}
    |(k-m)h(k,m)|=|(2k-r)(c-v)|\to\infty
\end{equation}
when $k\to\infty$ and $m=-k+r$.

The above discussion shows  that \eqref{equ-gamma-infty-1}  holds.
\begin{remark}\label{rem-2order}
From the proof of even case, we see if $p$ is a second order polynomial, we also have $\gamma'(v)=\infty$ for all $v\in \R$.
\end{remark}

\section{Applications}
\label{sec:5}

\subsection{Schr\"{o}dinger equations} Let
$$
p(k)=k^2, \quad k\in \N.
$$
Then the dispersive equation \eqref{eqn-abstract} reduces to the Schr\"{o}dinger equation \eqref{first}. It is easy to see that
$$
\Pi(v)=\O, v\in \R \backslash \Z; \quad \Pi(v) = \Z, v\in \Z.
$$
For every two integers $v_1\neq v_2$, the graph $G(v_1,v_2)$ consists of two two-way infinite paths, see Figure~\ref{fig:6}.
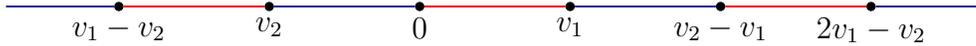
\begin{figure}[!htbp]
    \centering
    \vspace{0.04\textwidth}
\begin{tikzpicture}
       \SetVertexMath
    \GraphInit[vstyle=Simple]
    \tikzset{VertexStyle/.append style = {
minimum size = 3pt, inner sep =0pt}}
       \Vertex[x=0,y=0]{1}
       \Vertex[x=2,y=0]{2}
       \Vertex[x=4,y=0]{3}
       \Vertex[x=6,y=0]{4}
       \Vertex[x=8,y=0]{5}
       \Vertex[x=10,y=0]{6}
       \draw (1) node[below] {$v_1-v_2$};
       \draw (2) node[below] {$v_2$};
       \draw (3) node[below] {$0$};
       \draw (4) node[below] {$v_1$};
       \draw (5) node[below] {$v_2-v_1$};
       \draw (6) node[below] {$2v_1-v_2$};
       \draw[Red,thick,opacity=0.6] (1)--(2);
       \draw[Blue,thick,opacity=0.6] (2)--(3);
       \draw[Red,thick,opacity=0.6] (3)--(4);
       \draw[Blue,thick,opacity=0.6] (4)--(5);
       \draw[Red,thick,opacity=0.6] (5)--(6);
       \draw[Blue,thick,opacity=0.6] (-1.5,0)--(1);
       \draw[Blue,thick,opacity=0.6] (6)--(11.5,0);
\end{tikzpicture}
    \caption{An infinite path in $G(v_1,v_2)$ for the Schr\"{o}dinger equation.}
    \label{fig:6}
\end{figure}
Thus $g(v_1,v_2)=\infty$. Moreover, there is no two-colored circle in $G(v_1,v_2)$. By Remark \eqref{rem-2order}, we have $\gamma'(v)=\infty$. Then Proposition \ref{thm:main1} and Theorem~\ref{thm:main2} recovers Jaming and Komornik's work \cite{jaming2020moving} as follows.
\begin{proposition}[\cite{jaming2020moving}]
For any $T>0$, the following assertions hold for the Schr\"{o}dinger equation \eqref{first}.
\begin{itemize}
  \item [\rm{(1)}] Qualitative \eqref{eqn-qualitative} and quantitative \eqref{eqn-quantitative} observability holds on one line segment if and only if $v\in \R \backslash \Z$.
  \item [\rm{(2)}] If $v_1,v_2\in   \Z$ and $v_1\neq v_2$, the qualitative observability \eqref{eqn-qualitative} holds for $n=2$, but the quantitative observability \eqref{eqn-quantitative} fails.
\end{itemize}
\end{proposition}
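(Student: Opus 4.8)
The plan is to read the proposition off the general machinery by specializing to $p(k)=k^2$, so the only genuine computation is the collision structure of the frequencies $\lambda_k(v)=k^2-kv$ and the resulting shape of $G(v_1,v_2)$. For distinct integers $k,m$ one has $\lambda_k(v)=\lambda_m(v)$ exactly when $(k-m)(k+m-v)=0$, i.e.\ $k+m=v$; since $k,m\in\Z$ this forces $v\in\Z$. Hence $\Pi(v)=\O$ when $v\in\R\setminus\Z$, while for $v\in\Z$ every $k$ (with at most the single exception $k=v/2$) has the unique partner $v-k$, so $\Pi(v)$ is all of $\Z$ up to one point. I would also record, via Remark~\ref{rem-2order}, that $\gamma'(v)=\infty$ for all $v\in\R$, because this makes the standing hypotheses $T>2\pi/\gamma'(v)$ and $T>2\pi/\min\{\gamma'(v_1),\gamma'(v_2)\}$ automatic, which is precisely what lets the conclusions hold for every $T>0$.

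For part (1) I would simply invoke Proposition~\ref{thm:main1}: the observability inequality from one segment holds (for every $T>0$, since $\gamma'(v)=\infty$) if and only if $\Pi(v)=\O$, i.e.\ iff $v\in\R\setminus\Z$. In that case the inequality is quantitative, hence a fortiori qualitative. Conversely, when $v\in\Z$ we have $\Pi(v)\neq\O$, and Proposition~\ref{thm-sec2-main-1}(iii) produces an explicit nontrivial solution vanishing on the whole line, so even the qualitative property fails. Thus both properties hold precisely for $v\notin\Z$.

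For part (2), with $v_1\neq v_2$ both integers, the work is to describe $G(v_1,v_2)$ well enough to apply Theorem~\ref{thm:main2}. Writing $\sigma_1(k)=v_1-k$ and $\sigma_2(k)=v_2-k$ for the red- and blue-neighbor maps (each an involution), the alternating red--blue walk advances by the composition $\sigma_2\sigma_1\colon k\mapsto k+(v_2-v_1)$, a translation by the nonzero integer $v_2-v_1$. Consequently each connected component is an infinite path (Figure~\ref{fig:6} depicts the generic two-way case), and, crucially, no alternating closed walk can exist, since $(\sigma_2\sigma_1)^n(k)=k$ would force $n(v_2-v_1)=0$. By Lemma~\ref{thm-path-equiv} the absence of alternating cycles gives the absence of two-colored cycles, and the infinitude of a component gives $g(v_1,v_2)=\infty$. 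Theorem~\ref{thm:main2}(i) then yields the qualitative observability, while Theorem~\ref{thm:main2}(ii) shows the quantitative one fails.

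The argument is essentially bookkeeping on top of Theorem~\ref{thm:main2}, so there is no deep obstacle; the only point demanding a little care is the graph analysis in part (2)---in particular being accurate about the components (they are infinite paths, possibly \emph{one-way} at a reflection-fixed point $v_i/2$ when $v_i$ is even, rather than literally two two-way infinite paths), while noting that the two facts actually fed into Theorem~\ref{thm:main2}, namely ``no two-colored cycle'' and ``$g(v_1,v_2)=\infty$'', are robust to this distinction.
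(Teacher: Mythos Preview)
Your proposal is correct and follows essentially the same approach as the paper: compute $\Pi(v)$ from the factorization $(k-m)(k+m-v)=0$, invoke $\gamma'(v)=\infty$ via Remark~\ref{rem-2order}, apply Proposition~\ref{thm:main1} (together with Proposition~\ref{thm-sec2-main-1}(iii)) for part~(1), and feed the structure of $G(v_1,v_2)$ into Theorem~\ref{thm:main2} for part~(2). Your translation argument $(\sigma_2\sigma_1)(k)=k+(v_2-v_1)$ makes the ``no two-colored cycle'' claim explicit where the paper simply asserts it, and your caveat about one-way endpoints at $v_i/2$ when $v_i$ is even is in fact more precise than the paper's blanket statement that the graph consists of two two-way infinite paths; as you note, neither point affects the inputs to Theorem~\ref{thm:main2}.
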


In the sequel, we provide an example to show some differences between the second order Schr\"{o}dinger equation and higher order versions.  To this end, we consider a special higher-order linear Schr\"{o}dinger equation
\begin{equation}\label{eqn-higher-special}
    \partial_t u=i(-\partial_x^2u+\partial_x^4 u+\cdots+(-1)^{l}\partial_x^{2l}u), \quad u(0,x)=u_0(x)\in L^2(\T)
\end{equation}
where $l\in\N$ and $l>1$. It can be rewritten in the form \eqref{eqn-abstract} with
\begin{equation}
   p(k)=k^2+k^4+\cdots+k^{2l}.
\end{equation}

\begin{theorem}\label{thm-application-1}
Let $(t_1,x_1),(t_2,x_2)\in\R\times \T,v_1,v_2\in\R\times \T,v_1\neq v_2$  and $T>0$. Then the quantitative observability
\begin{equation}
    \int_\T | u_0 |^2\d x\lesssim \int_0^T\bigl(| u(t_1+t,x_1-v_1t) |^2+| u(t_2+t,x_2-v_2t) |^2\bigr)\d t
\end{equation}
holds for all solutions of \eqref{eqn-higher-special}.
\end{theorem}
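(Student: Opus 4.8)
The plan is to deduce everything from Theorem~\ref{thm:main3}. Since $l>1$, the symbol $p(x)=x^2+x^4+\cdots+x^{2l}$ is a real polynomial of degree $2l>2$, so Theorem~\ref{thm:main3} applies and the claimed quantitative observability is \emph{equivalent} to the purely combinatorial assertion that $G(v_1,v_2)$ has no two-colored cycle. The whole task therefore reduces to showing: for all distinct $v_1\neq v_2\in\R$, the graph $G(v_1,v_2)$ contains no two-colored cycle. The structural feature I would exploit is that $p$ is strictly convex on $\R$, since $p''(x)=2+12x^2+\cdots+2l(2l-1)x^{2l-2}\ge 2>0$. This convexity is exactly what separates the higher-order Schr\"odinger symbol from the KdV symbol $k^3$, and it is the source of the obstruction to cycles.

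First I would record the consequences of convexity for $f_v(x):=p(x)-vx$, so that $\lambda_k(v)=f_v(k)$. Because $f_v'=p'-v$ is strictly increasing with a single zero at $x^*(v):=(p')^{-1}(v)$, the function $f_v$ is strictly decreasing on $(-\infty,x^*(v))$ and strictly increasing on $(x^*(v),\infty)$; hence each level set of $f_v$ has at most two points, one on each branch. Two things follow. First, a vertex $k$ has at most one red neighbor (the unique second solution of $f_{v_1}(m)=f_{v_1}(k)$, when it is an integer) and at most one blue neighbor, so $G(v_1,v_2)$ has maximum degree two and every cycle in it is automatically alternating in color. Second, for each $v$ the map sending a point to its level-set partner defines a continuous, strictly decreasing involution $R_v\colon\R\to\R$ that fixes only $x^*(v)$ and interchanges the two branches; I write $R:=R_{v_1}$ and $B:=R_{v_2}$ for the red and blue involutions.

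The heart of the matter is to show that $T:=B\circ R$ has no periodic point. As a composition of two strictly decreasing involutions, $T$ is a continuous strictly increasing bijection of $\R$. It has no fixed point: if $T(x)=x$, then applying $B$ gives $R(x)=B(x)=:y$, whence $f_{v_1}(x)=f_{v_1}(y)$ and $f_{v_2}(x)=f_{v_2}(y)$; subtracting and using $f_{v_1}-f_{v_2}=(v_2-v_1)\,\mathrm{id}$ forces $(v_2-v_1)x=(v_2-v_1)y$, i.e.\ $x=y$, which contradicts $y=R(x)\neq x$. Since $T-\mathrm{id}$ is continuous and nowhere zero it has constant sign, so $T^n(x)$ is strictly monotone in $n$ for each $x$, and $T$ has no periodic point. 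To finish, note that an alternating cycle $k_1,k_2,\dots,k_{2m}$ with red edges $\{k_{2j-1},k_{2j}\}$ and blue edges $\{k_{2j},k_{2j+1}\}$ satisfies $k_{2j}=R(k_{2j-1})$ and $k_{2j+1}=B(k_{2j})$, so the odd-indexed vertices obey $k_{2j+1}=T(k_{2j-1})$ and, closing the cycle ($k_{2m+1}=k_1$), $T^m(k_1)=k_1$ with $m\ge 1$ — a periodic point, contradiction. Thus $G(v_1,v_2)$ has no alternating cycle, and by Lemma~\ref{thm-path-equiv} no two-colored cycle; Theorem~\ref{thm:main3} then gives the observability inequality for all distinct $v_1,v_2$ and all $T>0$.

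The main obstacle is precisely the fixed-point-free claim for $T$, where the two hypotheses $v_1\neq v_2$ and the convexity of $p$ enter together; everything else (the degree-two bound, the monotonicity of $T$, and the passage from a cycle to a periodic point) is bookkeeping. I would also verify two minor points: that $R$ and $B$ extend continuously across their fixed points $x^*(v_1),x^*(v_2)$, and that the possibility of an integer sitting exactly at some $x^*(v)$ causes no trouble, since an edge requires two \emph{distinct} integer solutions and such a minimizer simply carries no edge of that color.
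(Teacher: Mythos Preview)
Your proof is correct and follows essentially the same route as the paper: reduce via Theorem~\ref{thm:main3} to excluding two-colored cycles, use strict convexity of $p$ to see that each $R_v$ (the paper's $g_v$) is a strictly decreasing involution of $\R$, and conclude that an alternating cycle would force a periodic orbit of the strictly increasing map $T=R_{v_2}\circ R_{v_1}$. The only genuine difference is in how the periodic orbit is ruled out. The paper orders $v_2>v_1$ and uses the geometry of the level curves $C(v)$ (Figure~7) to get $g_{v_2}(x)>g_{v_1}(x)$ pointwise, which gives $k_3=g_{v_2}(k_2)>g_{v_1}(k_2)=k_1$ and hence $k_1<k_3<\cdots<k_{2j+1}$ directly. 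You instead show $T$ is fixed-point-free by the algebraic identity $f_{v_1}-f_{v_2}=(v_2-v_1)\,\mathrm{id}$ and then invoke continuity of $T-\mathrm{id}$; this is arguably tidier since it avoids comparing the two curves and needs no WLOG on the order of $v_1,v_2$. One small point to make explicit: in your fixed-point argument the conclusion $x=y$ does not by itself contradict $y=R(x)$, since $R$ has the fixed point $x^*(v_1)$; but then also $B(x)=y=x$ forces $x=x^*(v_2)$, and $x^*(v_1)=x^*(v_2)$ is impossible because $p'$ is injective and $v_1\neq v_2$.
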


\begin{proof}
Let $v\in\R$. The relation $\lambda_k(v)=\lambda_m(v)$ is equivalent to
\begin{equation*}
    \sum_{i=1}^{2l}k^{2l-i}m^{i-1}+\sum_{i=1}^{2l-2}k^{2l-2-i}m^{i-1}+\cdots +(k+m)=v
\end{equation*}
when $k\neq m$. This is related to the curve $C(v)$
\begin{equation}\label{eqn-higher-special-curve}
    \sum_{i=1}^{2l}x^{2l-i}y^{i-1}+\sum_{i=1}^{2l-2}x^{2l-2-i}y^{i-1}+\cdots+(x+y)=v.
\end{equation}
We claim that $y$ is a strictly decreasing function of $x$ along the curve $C(v)$. To see this,
we can rewrite \eqref{eqn-higher-special-curve} as
\begin{equation}\label{eqn-higher-1}
    \bigl(x+y\bigr)\biggl( \prod_{j=1}^{l-1}\bigl(x^2-2\cos \frac{j\pi}{l}xy+y^2\bigr)+\prod_{j=1}^{l-2}\bigl(x^2-2\cos\frac{j\pi}{l-1}xy+y^2\bigr)+\cdots+1 \biggr)=v.
\end{equation}

We first consider the case $v=0$, then the curve $C(v)$ reduces to $x+y=0$. Hence $y$ is strictly decreasing with $x$. If $v\neq 0$, then $x+y=0$ is the asymptotic line of $C(v)$. Assume $x>0$, write the equation of $C(v)$ as
\begin{equation*}
    x^{2l-1}\biggl( \Bigl(\frac{y}{x}\Bigr)^{2l-1}+ \Bigl(\frac{y}{x}\Bigr)^{2l-2}+\cdots+1  \biggr)+x^{2l-2}\biggl(\Bigl(\frac{y}{x}\Bigr)^{2l-3}+\cdots+1\biggr)+\cdots+1=v.
\end{equation*}
Then for any $(x_1,y_1),(x_2,y_2)\in C(v)$ and $x_2>x_1>0$, we obtain $y_2<y_1$ because of the strict increasing property of the function $x^{2m-1}+x^{2m-2}+\cdots+x+1,m\in\N$. This is also true for $x<0$.
Hence for any $v\in\R$, the curve $C(v)$ can be written as $y=g_v(x)$ which is a strictly decreasing function.

Now we choose any distinct $v_1,v_2\in\R$ and try to find an alternative cycle. Assume $v_2>v_1$ and any vertex $k_1\in G(v_1,v_2)$. If there exists a red-colored edge incident with $k_1$, then we must have an integer $k_2\neq k_1$ which satisfies $\lambda_{k_1}(v_1)=\lambda_{k_2}(v_1)$. This implies that $(k_1,k_2)\in C(v_1)$. If there exists a blue-colored edge incident with $k_2$, then we must have an integer $k_3\neq k_2$ and $k_3\neq k_1$ which satisfies $\lambda_{k_2}(v_2)=\lambda_{k_3}(v_2)$. This implies that $(k_3,k_2)\in C(v_2)$. If we can keep doing this $2j$ times so that $k_1,k_2,\cdots,k_{2j},k_{2j+1}=k_{1}$ can form an alternative cycle, then by the strict decreasing property of $g_v(x)$ we obtain
\begin{equation}
    k_1<k_3<\cdots<k_{2j+1},\label{eqn-higher-3}
\end{equation}
and
\begin{equation}
    k_2>k_4>\cdots>k_{2j}
\end{equation}
for all $i=1,2,\cdots,j$, see Figure~\ref{fig:7}.
\begin{figure}[!htbp]
    \centering
    \begin{tikzpicture}[scale=0.6]
        \draw[color=Red,thick,opacity=0.6] plot[domain=3:9.5,smooth] ({\x},{-\x +27/(\x*\x)});
        \draw[color=Blue,thick,opacity=0.6] plot[domain=4:9.5,smooth] ({\x},{-\x +64/(\x*\x)});
        \draw[very thick,->] (-0.03,0) -- (10,0) node[right] {$x$};
        \draw[very thick,->] (0,0) -- (0,-10);
        \draw (-0.2,-10) node[left] {$y$};
        \draw[color=Gray,dashed] plot[domain=0:9.5, smooth] (\x, -\x);
        \draw[color=Gray,dashed] (6,0)--(6,-5.25);
        \draw[color=Gray,dashed] (0,-5.25)--(6.7,-5.25);
        \draw[color=Gray,dashed] (6.7,-6.1)--(6.7,0);
        \draw[color=Gray,dashed] (7.3,-6.1)--(0,-6.1);
        \draw[color=Gray,dashed] (7.3,-6.8)--(7.3,0);
        \draw[color=Gray,dashed] (7.3,-6.8)--(0,-6.8);
        \draw (6,-5.25) node[circle, fill, inner sep=1pt] {};
        \draw (6.7,-5.25) node[circle,fill,inner sep=1pt] {};
        \draw (6.7,-6.1) node[circle,fill, inner sep =1pt] {};
        \draw (7.3,-6.1) node[circle,fill, inner sep =1pt] {};
        \draw (7.3,-6.8) node[circle,fill, inner sep =1pt] {};
        \draw (6,0) node[above] {\small $k_1$};
        \draw (6.7,0) node[above] {\small $k_3$};
        \draw (7.3+0.2,0) node[above] {\small $k_5$};
        \draw (0,-5.25) node[left] {\small $k_2$};
        \draw (0,-6.1) node[left] {\small $k_4$};
        \draw (0,-6.8) node[left] {\small $k_6$};
        \draw (0.15,-0.15) node[above left] { $o$};
        \draw (2.5,-3.2) node[rotate=-45] {\small $x+y=0$};
        \draw (7,-7.8) node {\small $C(v_1)$};
        \draw (9,-6.6) node {\small $C(v_2)$};
    \end{tikzpicture}
    \caption{Attempting to construct an alternative cycle.}
    \label{fig:7}
\end{figure}
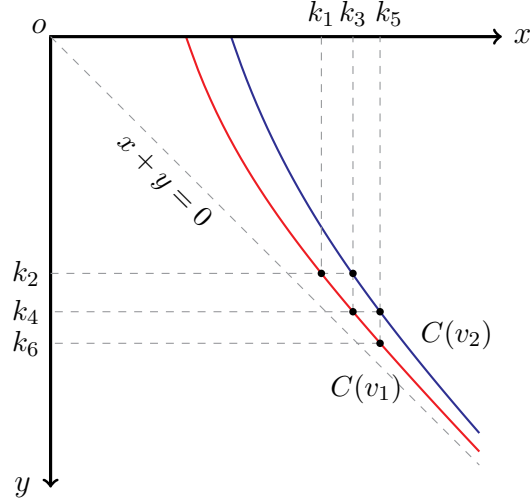
However \eqref{eqn-higher-3} contradicts to $k_{2j+1}=k_{1}$. Hence there exists no alternative cycle in $G(v_1,v_2)$. By Theorem~\ref{thm:main3} we conclude that the observability inequality holds.
\end{proof}

\subsection{The linear KdV equation}
The Korteweg-de Vries (KdV) equation reads
$$
    \partial_t u+u\partial_x u+\partial_x^3 u=0.
$$
It is introduced in \cite{Kort1895} to explain the observation of solitary waves in a shallow channel of water. Consider the linear KdV equation
\begin{equation}\label{eqn-linear-kdv}
    \partial_t u+\partial_x^3u=0, \quad u(0,x)=u_0(x)\in L^2(\T).
\end{equation}
It is a special case of \eqref{eqn-abstract} with $p(k)=k^3.$
 Hence by Theorem~\ref{thm:main3} the qualitative observability of \eqref{eqn-linear-kdv} with two moving points holds for any $T>0$ if and only if the quantitative observability holds.

Define a set
\begin{equation}
    \Gamma:=\bigl\lbrace k^2+km+m^2:k,m\in\Z \text{ and } k\neq m \bigr\rbrace.
\end{equation}

\begin{proposition}\label{thm-kdv-1}
Let $(t_0,x_0)\in \R\times \T$ and $v\in\R$.
\begin{enumerate}
    \item [\rm{(1)}] If $v\notin \Gamma$, then for every $T>0$, the quantitative observability
$$
    \int_\T | u_0 |^2\d x\lesssim \int_0^T | u(t_0+t,x_0-vt)|^2 \d t
$$
holds for all solutions of \eqref{eqn-linear-kdv}.
    \item [\rm{(2)}] If $v\in \Gamma$, then there exist non-trivial solutions satisfying
$$
        u(t_0+t,x_0-vt)=0,\quad \forall t\in\R.
$$
\end{enumerate}
\end{proposition}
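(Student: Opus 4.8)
The plan is to reduce both parts to the one–line–segment results of Proposition~\ref{thm-sec2-main-1}, once the collision set $\Pi(v)$ is given a clean algebraic description. Since $p(k)=k^3$, we have $\lambda_k(v)=k^3-kv$, so for distinct integers $k\neq m$ the identity $\lambda_k(v)=\lambda_m(v)$ reads $k^3-m^3=v(k-m)$. Factoring $k^3-m^3=(k-m)(k^2+km+m^2)$ and cancelling the nonzero factor $k-m$ shows this is equivalent to $k^2+km+m^2=v$. Hence the first step is the equivalence
\[
  \Pi(v)\neq\varnothing \iff v\in\Gamma,
\]
which is then immediate from the definition of $\Gamma$.

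Next I would record that $\gamma'(v)=\infty$ for every $v\in\R$. This is not special to the KdV equation: it follows from the odd-degree analysis in the proof of Theorem~\ref{thm:main3} (Case~1), applied to $p(k)=k^3$. Concretely, for distinct $k,m$ one has $|\lambda_k(v)-\lambda_m(v)|=|k-m|\,|k^2+km+m^2-v|\ge |k^2+km+m^2-v|$, and since $k^2+km+m^2\ge \tfrac34(k^2+m^2)$ this tends to $\infty$ as $\max(|k|,|m|)\to\infty$; removing a large enough finite set $W$ therefore makes the residual spectral gap arbitrarily large, so $\gamma'(v)=\infty$, and in particular $|\Pi(v)|<\infty$.

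With these two facts the proposition is a direct transcription of Proposition~\ref{thm-sec2-main-1}. For part~(1), $v\notin\Gamma$ gives $\Pi(v)=\varnothing$; combined with $\gamma'(v)=\infty$ the threshold $\tfrac{2\pi}{\gamma'(v)}$ equals $0$, so Proposition~\ref{thm-sec2-main-1}(ii) yields the observability inequality for every $T>0$. For part~(2), $v\in\Gamma$ gives $\Pi(v)\neq\varnothing$, so there is a colliding pair $k\neq m$ with $k^2+km+m^2=v$; Proposition~\ref{thm-sec2-main-1}(iii) then produces explicitly the non-trivial solution built from the difference of the two Fourier modes indexed by $k$ and $m$, which vanishes identically along $t\mapsto(t_0+t,x_0-vt)$.

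I expect essentially no obstacle beyond the elementary factorization: once $\Pi(v)$ is matched with the condition $v\in\Gamma$, both conclusions are consequences of the already-proved one-segment machinery. The only point warranting a line of care is the verification that the weak gap is infinite---equivalently, that collisions are finite in number while the residual gaps blow up---so that part~(1) holds for \emph{all} $T>0$ rather than merely for $T$ above some positive threshold.
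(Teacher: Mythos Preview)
Your proposal is correct and follows exactly the paper's approach: the paper's proof consists of the single sentence ``This is a direct corollary of Proposition~\ref{thm-sec2-main-1},'' and you have simply made explicit the two ingredients left implicit there---the identification $\Pi(v)\neq\varnothing\iff v\in\Gamma$ via the factorization $k^3-m^3=(k-m)(k^2+km+m^2)$, and the fact $\gamma'(v)=\infty$ already established in Section~\ref{sec:4}.
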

\begin{proof}
This is a direct corollary of Proposition~\ref{thm-sec2-main-1}.
\end{proof}

By the definition of $\Gamma$, if $v\in\R \backslash \Z$, then $v\notin \Gamma$, and by {Proposition~\ref{thm-kdv-1} (1)} , the observability holds from one segment. The following theorem provide a criterion for the observability inequality from two segments when $v_1,v_1$ are integers.

\begin{theorem}\label{thm-application-2}
Let $(t_1,x_1),(t_2,x_2)\in \R\times \T,v_1,v_2\in \Z, v_1\neq v_2$. Then
$$
    \int_\T | u_0 |^2\d x\lesssim \int_0^T\bigl(| u(t_1+t,x_1-v_1t) |^2+| u(t_2+t,x_2-v_2t) |^2\bigr)\d t
$$
holds for all solutions of \eqref{eqn-linear-kdv} if $v_1$ and $v_2$ satisfies
\begin{enumerate}
    \item [\rm{(1)}] $v_2>4v_1>0$ or
    \item [\rm{(2)}] there exists a prime $p\equiv 2\pmod{3}$ such that $\mathrm{ord}_p(v_1)\neq \mathrm{ord}_p(v_2)$, where $\mathrm{ord}_p(v)$ denotes the largest integer $m$ such that $p^m \mid v$.
\end{enumerate}
\end{theorem}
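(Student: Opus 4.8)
The plan is to reduce everything to a graph statement via Theorem~\ref{thm:main3}: since $p(k)=k^3$ has degree $3>2$, the quantitative observability is equivalent to $G(v_1,v_2)$ having no two-colored cycle, and by Lemma~\ref{thm-path-equiv} it suffices to rule out \emph{alternative} cycles. First I would record the edge relation: for $k\neq m$ one has $\lambda_k(v)=\lambda_m(v)$ iff $k^3-m^3=v(k-m)$, i.e. iff $Q(k,m):=k^2+km+m^2=v$; thus a red edge joins $k,m$ exactly when $Q(k,m)=v_1$ and a blue edge when $Q(k,m)=v_2$. Since $Q(k,m)=(k+\tfrac m2)^2+\tfrac34 m^2\ge \tfrac34 m^2$ and is symmetric, every vertex $k$ incident to a $v$-colored edge satisfies $k^2\le\tfrac43 v$; in particular a value carrying an edge is positive, so if some $v_i\le 0$ then $\Pi(v_i)=\O$ and no two-colored cycle exists, and I may assume $v_1,v_2>0$. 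I then argue by contradiction, supposing there is an alternative cycle $k_1,\dots,k_{2m}$ with red edges $e(k_{2j-1},k_{2j})$ and blue edges $e(k_{2j},k_{2j+1})$ (indices cyclic), the key point being that every vertex is incident to exactly one red and one blue edge.

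Under condition (1) the argument is purely metric. Every vertex lies on a red edge, so $k_i^2\le\tfrac43 v_1$ for all $i$. Taking any blue edge $(b,c)$, both $|b|,|c|\le\tfrac{2}{\sqrt3}\sqrt{v_1}$, and since $|bc|\le\tfrac12(b^2+c^2)$ I obtain $v_2=Q(b,c)=b^2+bc+c^2\le 3\cdot\tfrac43 v_1=4v_1$, contradicting $v_2>4v_1$. Hence condition (1) forbids alternative cycles.

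Under condition (2) I would pass to the Eisenstein integers $\Z[\omega]$, $\omega=e^{2\pi\i/3}$, in which $Q(a,b)=N(a-b\omega)$. A prime $p\equiv 2\pmod 3$ is inert (equivalently $-3$ is a non-residue mod $p$), and since $p^\ell\mid a-b\omega$ in $\Z[\omega]$ iff $p^\ell$ divides both $a$ and $b$, one gets $\mathrm{ord}_p(Q(a,b))=2\min(\mathrm{ord}_p a,\mathrm{ord}_p b)$; in particular $\mathrm{ord}_p$ of any value of $Q$ is even, so $f_i:=\tfrac12\mathrm{ord}_p(v_i)$ is a well-defined integer, and $\mathrm{ord}_p(v_1)\neq\mathrm{ord}_p(v_2)$ gives $f_1\neq f_2$; assume without loss of generality $f_1<f_2$. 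Each blue edge $(k_{2j},k_{2j+1})$ forces $\min(\mathrm{ord}_p k_{2j},\mathrm{ord}_p k_{2j+1})=f_2$, so both its endpoints have $\mathrm{ord}_p\ge f_2$; as every vertex is a blue endpoint, every $\mathrm{ord}_p k_i\ge f_2>f_1$. But each red edge requires $\min(\mathrm{ord}_p k_{2j-1},\mathrm{ord}_p k_{2j})=f_1$, so some vertex has $\mathrm{ord}_p=f_1<f_2$, a contradiction. Thus no alternative cycle exists, and the observability follows.

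The main obstacle I expect is the number-theoretic half (2): establishing and correctly applying the valuation formula $\mathrm{ord}_p(Q(a,b))=2\min(\mathrm{ord}_p a,\mathrm{ord}_p b)$ for inert primes. This rests on identifying $Q$ with the norm form of $\Z[\omega]$, on the inertness of primes $p\equiv 2\pmod 3$, and on the divisibility characterization $p^\ell\mid a-b\omega \iff p^\ell\mid a,\ p^\ell\mid b$; the metric argument for (1) is by comparison elementary.
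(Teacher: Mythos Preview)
Your proof is correct and rests on the same two pillars as the paper: Theorem~\ref{thm:main3} to reduce to ``no two-colored cycle'', and the Eisenstein-integer interpretation $Q(a,b)=N(a-b\omega)$ for condition~(2). The route, however, is somewhat cleaner than the paper's. In the paper, condition~(1) is handled by analyzing the cubic $y=x^3-vx$: if $k\in\Pi(v_1)\cap\Pi(v_2)$ then $|k|\le 2\sqrt{v_1/3}$, and any blue neighbor $k'$ of $k$ must satisfy $|k'|\ge\sqrt{v_2/3}>2\sqrt{v_1/3}$, hence $k'\notin\Pi(v_1)$; one then invokes Lemma~\ref{thm-reduced-lemma} to conclude $c_k=0$. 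For condition~(2) the paper argues (with $\mathrm{ord}_p(v_1)>\mathrm{ord}_p(v_2)$) that $p^e\mid k$ for every $k\in\Pi(v_1)$, while any blue neighbor $l$ of such $k$ has $p^e\nmid l$, so $l\notin\Pi(v_1)$; again Lemma~\ref{thm-reduced-lemma} finishes. By contrast, you bypass the reduced-graph machinery entirely: using Lemma~\ref{thm-path-equiv} you work directly with a putative alternative cycle, where every vertex carries one edge of each color, and derive a contradiction from the elementary bound $Q(b,c)\le\tfrac32(b^2+c^2)$ in case~(1) and from the crisp valuation identity $\mathrm{ord}_p\,Q(a,b)=2\min(\mathrm{ord}_p a,\mathrm{ord}_p b)$ in case~(2). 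Your formulation of that identity is a nice repackaging of the paper's step-by-step Eisenstein divisibility argument; both approaches are equivalent in content, but yours is shorter and avoids appealing to Lemma~\ref{thm-reduced-lemma}.
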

\begin{proof}
(i) Assume that $v_2>4v_1>0$. By Theorem~\ref{thm:main3}, the quantitative observability holds if we can show the qualitative one, namely
$$
u(t_i+t,x_i-v_it)=0,t\in (0,T), i=1,2 \Longrightarrow   u_0\equiv 0.
$$
It suffices to show {that} every Fourier coefficient $c_k$ of $u_0$ is zero. In fact, if $k\notin \Pi(v_i)$ for some $i=1,2$, then we must have $c_k=0$ by Proposition~\ref{thm-sec2-main-1} (iv). Now we assume $k\in \Pi(v_1)\cap\Pi(v_2)$. According to the graph of $y=x^3-vx$, if $k>2\bigl( \frac{v}{3} \bigr)^{1 /2}$, then there exists no $m\in \Z$ such that $k^3-vk=m^3-vm$. Hence we only need to consider $k\in \bigl[-2\bigl( \frac{v}{3} \bigr)^{1 /2},2\bigl( \frac{v}{3} \bigr)^{1 /2}\bigr]$. Since $k\in \Pi(v_2)$ there exists another $k'\neq k$ such that $k'^3-v_2k'=k^3-v_2k$. This implies that
\begin{equation}
    |k'|\ge \Bigl( \frac{v_2}{3} \Bigr)^{1 /2}>2\Bigl(\frac{v_1}{3}\Bigr)^{1 /2}.
\end{equation}
Hence $k'\notin \Pi(v_1)$, see Figure~\ref{fig:8}.    \begin{figure}[htbp]
        \centering
         \begin{tikzpicture}[xscale=1.1,yscale=0.8]
         \draw[Red,thick,opacity=0.6] plot[domain=-1.8:1.8, smooth] (\x, \x*\x*\x-1*\x);
         \draw[Blue,thick,opacity=0.6] plot[domain=-2.1:2.1, smooth] (\x, \x*\x*\x-4.2*\x);
           \draw[very thick,->] (-3.5,0) -- (3.5,0) node[right] {$x$};
           \draw[very thick,->] (0,-4) -- (0,4) node[above] {$y$};
           \draw (-0.2,-0.01) node[below] {\small $0$};
         \draw[dashed] (0.7,0) node[above] {$k$} -- (0.7,-2.62);
         \draw[dashed] (0.7,-2.62) -- (1.6,-2.62);
         \draw[dashed] (1.6,-2.62) -- (1.6,2.53);
         \draw (1.5,0) node[above right] {$k'$};
         \draw (1.9,4) node[right] {$y=x^3-v_1x$};
         \draw (2.2,0.5) node[right] {$y=x^3-v_2x$};
        \end{tikzpicture}
        \caption{Graphs of $y=x^3-v_1x$ and $y=x^3-v_2 x$ with $v_2>4v_1>0$.}
        \label{fig:8}
    \end{figure}
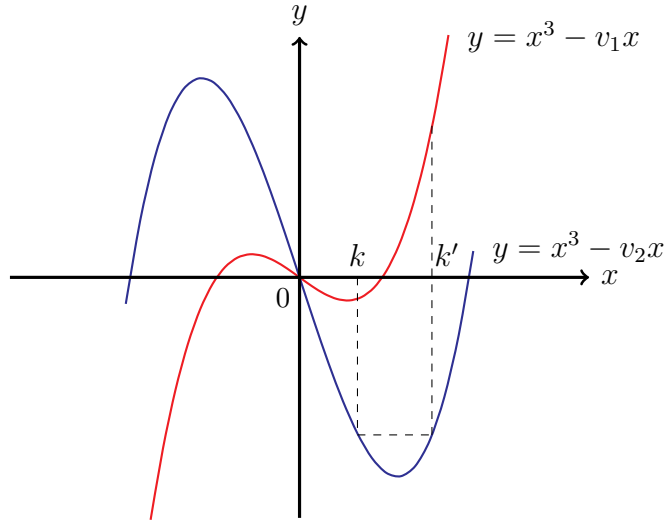
and then each $k\in \Pi(v_1)\cap\Pi(v_2)$ satisfy {the} conditions Lemma~\ref{thm-reduced-lemma}, thus $c_k=0$.

(ii) Without loss of generality, we assume that $\mathrm{ord}_p(v_1)>\mathrm{ord}_p(v_2)$, $v_1,v_2\in\Gamma$, and set $\mathrm{ord}_p(v_1)=2e,e\in\N$ (here we use the property that any integer $n$ can be written {in} the form $n=k^2+km+m^2,k,m\in\Z$ if and only if  $2\mid \mathrm{ord}_p(n)$ for all $p\equiv 2\pmod{3}$, see \cite[Proposition 9.14]{ireland1990classical}). We only need to consider the vertex $k\in \Pi(v_1)\cap\Pi(v_2)$. Since $k\in\Pi(v_1)$, there exists $m\neq k,m\in\Z$ such that
\begin{equation*}
    v_1=k^2+km+m^2.
\end{equation*}
Define $\omega:=e^{\frac{2\pi i}{3}}$, then we can decompose $v_1$ as
\begin{equation*}
    v_1=\bigl( k-m\omega \bigr)\bigl(k-m\overline{\omega}\bigr).
\end{equation*}
Notice that $p\equiv 2\pmod{3}$, i.e., $p$ is an Eisenstein prime which is irreducible in the integral domain $\Z [\omega]$. By the unique factorization theorem of $\Z[\omega]$, we obtain
\begin{equation*}
    p^e \mid k-m\omega \text{ and }p^e \mid k-m\overline{\omega},
\end{equation*}
which implies
\begin{equation}\label{eqn-linear-kdv-prime-1}
    p^e\mid k \text{ and }p^e \mid m.
\end{equation}
On the other hand, by $k\in \Pi(v_2)$ we know that there exists $l\neq k,l\in\Z$ such that
\begin{equation*}
    v_2=k^2+kl+l^2.
\end{equation*}
Then we have the decomposition
\begin{equation*}
    v_2=\bigl(k-l\omega\bigr)\bigl(k-l\overline{\omega}\bigr).
\end{equation*}
Since $\mathrm{ord}_p(v_1)>\mathrm{ord}_p(v_2)$, we have $p^{2e}\nmid v_2$, and thus
\begin{equation}\label{eqn-linear-kdv-prime-2}
    p^e\nmid k-l\omega \text{ and }p^e\nmid k-l\overline{\omega}.
\end{equation}
Then by $p^e\mid k$ in \eqref{eqn-linear-kdv-prime-1} and $p^e\nmid k-l\omega$ in \eqref{eqn-linear-kdv-prime-2} we obtain $p^e\nmid l$. Hence $l\notin \Pi(v_1)$. This implies that for each $k\in \Pi(v_1)\cap\Pi(v_2)$, there exists only red-colored edges. Therefore, the reduced graph $\Tilde{G}(v_1,v_2)$ is empty and we complete the proof.
\end{proof}

\section*{Acknowledgments}
We owe our sincere thanks to the referees for their insightful comments which have largely improved the presentation of this work.
This work is partially supported by the National Natural Science Foundation of China under grants No.12171442, 12171178.


\end{document}